\tikzstyle{vertex}=[circle, draw, inner sep=0pt, minimum size=6pt]
\newtheorem{theorem}[subsection]{Theorem}
\newtheorem{lemma}[subsection]{Lemma}
\newtheorem{problem}[subsection]{Problem}
\newcounter{Hcase}
\theoremstyle{plain}
\date{\today}
\begin{document}

\title{Non-uniform Cross-intersecting Families}

\author{\small Zhen Jia${\thanks{E-mail address: 12231271@mail.sustech.edu.cn}}^{~1}$,~Qing Xiang${\thanks{Research partially supported by the National Natural Science Foundation of China Grant (No. 12071206, 12131011). E-mail address: xiangq@sustech.edu.cn}}^{~1}$,  Jimeng Xiao${\thanks{Research supported by Shenzhen Science and Technology Program (No. RCBS20221008093102011). E-mail address: xiaojm@sustech.edu.cn}}^{~1}$ and Huajun Zhang${\thanks{Research partially supported by the National Natural Science Foundation of China Grant (No. 12371332, 11971439). E-mail address: huajunzhang@usx.edu.cn}}^{~2}$
 \\[2mm]
\small ${}^1$  Department of Mathematics\\[-0.8ex]
\small Southern University of Science and Technology\\[-0.8ex]
\small Shenzhen, Guangdong 518055, China\\
\small ${}^2$Department of Mathematics\\[-0.8ex]
\small Shaoxing University \\[-0.8ex]
\small Shaoxing, Zhejiang 312000, China}


\baselineskip 20pt

\date{}
\maketitle
 \vspace{4mm}

\begin{abstract}
 Let $m\geq 2$, $n$ be positive integers, and $R_i=\{k_{i,1} >k_{i,2} >\cdots> k_{i,t_i}\}$ be subsets of $[n]$ for $i=1,2,\ldots,m$. The families $\mathcal{F}_1\subseteq \binom{[n]}{R_1},\mathcal{F}_2\subseteq \binom{[n]}{R_2},\ldots,\mathcal{F}_m\subseteq \binom{[n]}{R_m}$ are said to be non-empty cross-intersecting if for each $i\in [m]$, $\mathcal{F}_i\neq\emptyset$  and for any $A\in \mathcal{F}_i,B\in\mathcal{F}_j$, $1\leq i<j\leq m$, $|A\bigcap B|\geq1$. 
 In this paper, we determine the maximum value of $\sum_{j=1}^{m}|\mathcal{F}_j|$ for non-empty cross-intersecting family $\mathcal{F}_1, \mathcal{F}_2,\ldots,\mathcal{F}_m$ when $n\geq k_1+k_2$, where $k_1$ (respectively, $k_2$) is the largest (respectively, second largest) value in $\{k_{1,1},k_{2,1},\ldots,k_{m,1}\}$.
This result is a generalization of the results by Shi, Frankl and Qian \cite{shi2022non} on non-empty cross-intersecting families. Moreover, the extremal families are completely characterized.

\vskip 0.1in \noindent%
\textbf{Keywords}: EKR theorem, non-empty cross-intersecting families; the generating set method 
 \\[7pt]
{\sl MSC:}\ \ 05D05
\end{abstract}


\maketitle
\section{Introduction}

In this paper, we consider non-uniform cross-intersecting families. First we fix some notation. Throughout this paper, small letters are used to denote integers, capital letters are used for sets of integers, and script capital letters are used for collections of sets. The set of integers from $1$ to $n$ inclusive is denoted by $[n]$. A $k$-set (or $k$-subset) is a set of cardinality $k$ and the collection of all $k$-subsets of $[n]$ is denoted by $\binom{[n]}{k}$.

Let $n$ and $k$ be integers with $1\leq k\leq n$. A family $\mathcal{F}\subseteq \binom{[n]}{k}$ is said to be \emph{intersecting} if $|A\bigcap B|\geq1$ for any $A,B\in \mathcal{F}$. The celebrated Erd\H{o}s-Ko-Rado theorem determines the maximum size of intersecting families, and also characterizes the extremal families.
\begin{theorem} {\em \cite{erdos1961intersection}}
    Let $n\geq 2k$. If $\mathcal{F}\subseteq\binom{[n]}{k}$ is an intersecting family, then $|\mathcal{F}|\leq\binom{n-1}{k-1}$. Moreover, for $n>2k$, the equality holds if and only if $\mathcal{F}=\{A\in\binom{[n]}{k}:a\in A\}$ for some $a\in [n]$.
\end{theorem}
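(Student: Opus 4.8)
The plan is to use Katona's cyclic-permutation argument, which yields both the bound and, with more care, the characterization. Fix a cyclic arrangement of $[n]$ (say, place $1,2,\dots,n$ around a circle) and call a $k$-set an \emph{arc} if its $k$ elements occur consecutively around the circle; there are exactly $n$ arcs. The core lemma is: if $n\geq 2k$, then any intersecting family $\mathcal{A}$ of arcs has $|\mathcal{A}|\leq k$. To see this, fix one arc $A\in\mathcal{A}$; reading clockwise, every other arc meeting $A$ must begin at one of the $k-1$ cut-points strictly inside $A$. Each such cut-point is the clockwise starting point of exactly two arcs, and since $n\geq 2k$ those two arcs are disjoint, so $\mathcal{A}$ contains at most one of them. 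Hence $|\mathcal{A}|\leq 1+(k-1)=k$.

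Next I would run the standard double count. Count pairs $(\pi,A)$ where $\pi$ is one of the $n!$ linear orderings of $[n]$ (closed up cyclically) and $A\in\mathcal{F}$ is an arc of $\pi$. On one hand a fixed $k$-set is an arc in exactly $n\cdot k!\,(n-k)!$ orderings, so the number of pairs equals $|\mathcal{F}|\cdot n\cdot k!\,(n-k)!$. On the other hand, summing the core lemma over all $\pi$ bounds the number of pairs by $n!\cdot k$. Rearranging gives $|\mathcal{F}|\leq \frac{k\,(n-1)!}{k!\,(n-k)!}=\binom{n-1}{k-1}$, and the star $\{A\in\binom{[n]}{k}:a\in A\}$ meets this with equality, proving the first assertion.

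For the characterization when $n>2k$, equality forces that in \emph{every} linear ordering exactly $k$ arcs of $\mathcal{F}$ occur, and that the core lemma is tight for every ordering. The key structural step is then: when $n>2k$, any family of $k$ pairwise-intersecting arcs must share a common element (so it is exactly the set of $k$ arcs through a fixed point). This follows by pushing the inequalities in the core lemma's proof to equalities and tracking which of the complementary arc-pairs are used; here $n>2k$ is genuinely needed, since at $n=2k$ the freedom to pick either arc from each antipodal pair produces many non-star extremal families. Finally, one globalizes the local common point: any two cyclic orderings are connected by a sequence of adjacent transpositions, and the "center" of the $k$ selected arcs can be shown to persist along such moves, forcing a single global $a\in[n]$ with $\mathcal{F}=\{A\in\binom{[n]}{k}:a\in A\}$.

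The bound itself is routine once the core lemma is in hand; the main obstacle is the characterization, specifically the two delicate points above — proving that $k$ pairwise-intersecting arcs with $n>2k$ have a common point, and then propagating this local structure to a global common element rather than deducing it in one line. (An alternative to the whole argument is the shifting/compression method with induction on $n$, splitting $\mathcal{F}$ according to whether a set contains $n$ and using that compression preserves the intersecting property and cardinality; this gives the bound cleanly, but extracting uniqueness from it is, if anything, more awkward, since compression can send non-stars to stars.)
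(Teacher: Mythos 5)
This statement is the classical Erd\H{o}s--Ko--Rado theorem, which the paper cites from the literature and does not prove, so there is no in-paper argument to compare yours against; your proposal has to stand on its own. It does: the Katona circle method you outline is a correct and standard proof. The core lemma is right (your phrasing that each internal cut-point is ``the clockwise starting point of exactly two arcs'' is slightly off --- each cut-point is the start of one arc and the end of another, and it is this complementary pair inside a window of $2k$ consecutive points that is disjoint when $n\geq 2k$ --- but the intended pairing and the bound $|\mathcal{A}|\leq k$ are correct), and the double count $|\mathcal{F}|\cdot n\cdot k!\,(n-k)! \leq n!\cdot k$ gives $\binom{n-1}{k-1}$ as claimed. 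For the uniqueness step, your ``local star'' claim is cleanest to justify via the Helly property of short circular arcs: when $n>2k$ each arc covers less than half the circle, and pairwise-intersecting arcs each shorter than a semicircle have a common point, so an extremal arc family is exactly the $k$ arcs through one point. The remaining globalization (showing the local center is a single element $a$ across all cyclic orders, e.g.\ by connecting orders via adjacent transpositions) is the only part you leave as a sketch; it is genuinely the fiddly part of Katona's uniqueness argument, but it is known to go through, so I would count this as an incomplete execution rather than a gap in the plan. For the record, the original Erd\H{o}s--Ko--Rado proof uses the shifting/induction route you mention in your closing parenthesis; your circle-method route is the one that more naturally yields the extremal characterization.
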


Cross-intersecting families are a variation of intersecting families. Let $m\geq2$ and $r_1,r_2,\ldots,r_m$ be positive integers. The families $\mathcal{F}_1\subseteq\binom{[n]}{r_1},\mathcal{F}_2\subseteq\binom{[n]}{r_2},\ldots,\mathcal{F}_m\subseteq\binom{[n]}{r_m}$ are said to be \emph{cross-intersecting} if $|A\bigcap B|\geq1$ for any $1\leq i<j\leq m$ and $A\in \mathcal{F}_i,B\in\mathcal{F}_j$. Cross-intersecting families $\mathcal{F}_1, \mathcal{F}_2,\ldots,\mathcal{F}_m$ are said to be non-empty if $\mathcal{F}_i\neq\emptyset$ for each $i\in [m]$. 

Hilton and Milner \cite{hilton1967some} determined the maximum sum of sizes of cross-intersecting families $\mathcal{F}_1,\mathcal{F}_2\subseteq\binom{[n]}{k}$; their result
is the first on the sum of sizes of cross-intersecting families.
\begin{theorem} {\em \cite{hilton1967some}}
     Let $n$ and $k$ be positive integers and $n\geq 2k$. Suppose that $\mathcal{F}_1,\mathcal{F}_2\subseteq\binom{[n]}{k}$ are non-empty cross-intersecting families. Then 
     \begin{equation*}
         |\mathcal{F}_1|+|\mathcal{F}_2|\leq 1+\binom{n}{k}-\binom{n-k}{k}.
     \end{equation*}
Equality holds if $\mathcal{F}_1=\{[k]\}$ and $\mathcal{F}_2=\{F\in\binom{[n]}{k}:|F\bigcap[k]|\geq1\}$.    
\end{theorem}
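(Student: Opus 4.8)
The plan is to reduce to left-compressed families, dispose of the one-element case by hand, and then bound the general case using the Kruskal--Katona theorem. First I would recall that applying a compression $S_{ij}$ $(i<j)$ simultaneously to $\mathcal{F}_1$ and $\mathcal{F}_2$ preserves $|\mathcal{F}_1|$, $|\mathcal{F}_2|$, the non-emptiness of each family, and the cross-intersecting property; hence we may assume both $\mathcal{F}_1$ and $\mathcal{F}_2$ are shifted. A non-empty shifted $k$-uniform family on $[n]$ necessarily contains $[k]$, so $[k]\in\mathcal{F}_1\cap\mathcal{F}_2$, and consequently every member of $\mathcal{F}_1$ and of $\mathcal{F}_2$ meets $[k]$.

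If $|\mathcal{F}_1|=1$, say $\mathcal{F}_1=\{A\}$, then every $B\in\mathcal{F}_2$ meets the fixed $k$-set $A$, so $|\mathcal{F}_2|\le\binom{n}{k}-\binom{n-k}{k}$ and $|\mathcal{F}_1|+|\mathcal{F}_2|\le 1+\binom{n}{k}-\binom{n-k}{k}$, with equality exactly when $\mathcal{F}_2=\{B:B\cap A\ne\emptyset\}$; the case $|\mathcal{F}_2|=1$ is symmetric. So from now on assume $|\mathcal{F}_1|,|\mathcal{F}_2|\ge 2$ and aim for the strict inequality $|\mathcal{F}_1|+|\mathcal{F}_2|<1+\binom{n}{k}-\binom{n-k}{k}$.

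For the main case I would pass to complements. Put $\overline{\mathcal{F}_2}=\{[n]\setminus B:B\in\mathcal{F}_2\}\subseteq\binom{[n]}{n-k}$. The cross-intersecting condition says precisely that no $A\in\mathcal{F}_1$ is contained in any member of $\overline{\mathcal{F}_2}$, i.e. $\mathcal{F}_1$ is disjoint from the $k$-th shadow $\partial\overline{\mathcal{F}_2}=\{C\in\binom{[n]}{k}:C\subseteq D\text{ for some }D\in\overline{\mathcal{F}_2}\}$. Therefore
\[
|\mathcal{F}_1|+|\mathcal{F}_2|\ \le\ \binom{n}{k}-|\partial\overline{\mathcal{F}_2}|+|\mathcal{F}_2|,
\]
and it suffices to prove $|\partial\overline{\mathcal{F}_2}|\ge |\mathcal{F}_2|+\binom{n-k}{k}-1$. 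Here the Kruskal--Katona theorem supplies the lower bound on $|\partial\overline{\mathcal{F}_2}|$ in terms of $|\mathcal{F}_2|$; the extra input is that, since $\mathcal{F}_1\ne\emptyset$, the shadow $\partial\overline{\mathcal{F}_2}$ is a \emph{proper} subfamily of $\binom{[n]}{k}$, which forces the leading term of the cascade (colex) representation of $|\mathcal{F}_2|$ to be small and makes the required inequality sharp exactly at the extremal configuration.

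The heart of the proof is this last shadow inequality, and the main obstacle is that the Lov\'asz form of Kruskal--Katona is \emph{not} strong enough when $|\mathcal{F}_2|$ is large (close to a full star): one has to use the exact combinatorial form of the theorem together with the properness of the shadow, and then verify the resulting inequality on cascade representations, which is a short but somewhat delicate case analysis. An alternative that sidesteps Kruskal--Katona is the generating-set method applied to the shifted families: once $|\mathcal{F}_1|\ge 2$, the family $\mathcal{F}_1$ also contains $[k-1]\cup\{k+1\}$, which already forces $|\mathcal{F}_2|\le\binom{n}{k}-\binom{n-k}{k}-\binom{n-k-1}{k-1}$, and each further minimal set of $\mathcal{F}_1$ in the shifting order deletes still more members from $\mathcal{F}_2$; running this over the possible generating sets of $\mathcal{F}_1$ (and dually of $\mathcal{F}_2$) yields the strict bound. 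Finally, the equality case is recovered by tracking when the compressions and the above inequalities are tight, which pins $\mathcal{F}_1$ (or $\mathcal{F}_2$) down to a single $k$-set and the other family to the corresponding full star.
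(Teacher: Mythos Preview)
The paper does not give its own proof of this Hilton--Milner statement; it is quoted from \cite{hilton1967some} as background, and only later recovered as the special case $m=2$, $R_1=R_2=\{k\}$ of Theorem~\ref{maintheorem}. So the relevant comparison is between your sketch and the machinery of Section~3: there the reduction is not merely to shifted families but, via Theorem~\ref{keyleftlemma}, to $L$-initial families, after which the argument proceeds through generating families, the extent $l$, and the exchange lemmas (Lemmas~\ref{keylemma}--\ref{new-cross-inter2}) together with log-concavity. Your ``alternative'' paragraph is the embryo of exactly this method; your primary route via complements and Kruskal--Katona is a genuinely different, classical approach that the paper does not use. Both are viable for the bare inequality, and your reduction $|\mathcal F_1|+|\mathcal F_2|\le \binom{n}{k}-|\partial\overline{\mathcal F_2}|+|\mathcal F_2|$ is correct, as is the target shadow inequality $|\partial\overline{\mathcal F_2}|\ge |\mathcal F_2|+\binom{n-k}{k}-1$ under the properness constraint.

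There is, however, one genuine slip. You assert that for $|\mathcal F_1|,|\mathcal F_2|\ge 2$ one should aim for the \emph{strict} inequality. This is false at $n=2k$: there $\binom{n-k}{k}=1$, the $k$-shadow of $\overline{\mathcal F_2}\subseteq\binom{[n]}{k}$ is $\overline{\mathcal F_2}$ itself, and for \emph{any} nonempty $\mathcal F_2\subsetneq\binom{[n]}{k}$ the pair $\mathcal F_1=\binom{[n]}{k}\setminus\overline{\mathcal F_2}$, $\mathcal F_2$ is cross-intersecting with $|\mathcal F_1|+|\mathcal F_2|=\binom{n}{k}=1+\binom{n}{k}-\binom{n-k}{k}$ and typically $|\mathcal F_1|,|\mathcal F_2|\ge 2$. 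This does not damage the theorem as stated (it only asserts $\le$ and a sufficient condition for equality), but it does break your proposed equality analysis, and the paper's Theorem~\ref{maintheorem}(iii) records precisely this continuum of extremals at $n=2k$. Beyond this, both of your routes remain outlines: the Kruskal--Katona step (``short but somewhat delicate case analysis'') is not carried out, and in the generating-set alternative the sentence ``each further minimal set \ldots\ deletes still more members'' hides the entire trade-off argument that the paper makes precise through Lemmas~\ref{keylemma}--\ref{maximal-f}.
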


Borg and Feghali \cite{borg2022maximum} investigated non-uniform cross-intersecting families. 
Write $\binom{[n]}{\leq s}=\bigcup_{i=1}^{s}\binom{[n]}{i}$. These authors determined the maximum sum size and structure of extremal cross-intersecting families when $\mathcal{F}_1\subseteq\binom{[n]}{\leq r}$ and $\mathcal{F}_2\subseteq\binom{[n]}{\leq s}$. In \cite{FRANKL2024124},  Frankl, Liu, Wang and Yang used the generating set method to derive a weighted version of Borg and Feghali’s result. Let $R$ be a subset of $[n]$ and $\binom{[n]}{R}=\bigcup_{r\in R}\binom{[n]}{r}$. As a generalization, Li, Liu, Song and Yao \cite{li2023maximum} obtained an analogous result in the case where $\mathcal{F}_1\subseteq\binom{[n]}{R}, \mathcal{F}_2\subseteq\binom{[n]}{S}$ and $R,S$ are subsets of $[n]$.

\begin{theorem}{\em \cite{borg2022maximum}}
    If $n\geq 1$, $1\leq r\leq s$, $\mathcal{F}_1\subseteq\binom{[n]}{\leq r}$,  $\mathcal{F}_2\subseteq\binom{[n]}{\leq s}$, and $\mathcal{F}_1$ and $\mathcal{F}_2$ are non-empty and cross-intersecting, then
    \begin{equation*}
        |\mathcal{F}_1|+|\mathcal{F}_2|\leq 1+\sum_{i=1}^{s}\left(\binom{n}{i}-\binom{n-r}{i}\right)
    \end{equation*}
    and equality holds if $\mathcal{F}_1=\{[r]\}$ and $\mathcal{F}_2=\{B\in\binom{[n]}{\leq s}:B\bigcap [r]\neq\emptyset\}$.
\end{theorem}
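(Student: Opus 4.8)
The plan is to pass from the two-family problem to an inequality about a single family, and then to induct on the number of its minimal members.

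First I would make two harmless enlargements. Since any subset of $[n]$ of size at most $s$ that contains a set meeting every member of $\mathcal{F}_1$ itself meets every member of $\mathcal{F}_1$, replacing $\mathcal{F}_2$ by $\tau_{\le s}(\mathcal{F}_1):=\{B\in\binom{[n]}{\le s}:A\cap B\neq\emptyset\text{ for all }A\in\mathcal{F}_1\}$ preserves cross-intersection and non-emptiness while only increasing $|\mathcal{F}_2|$. Likewise, replacing $\mathcal{F}_1$ by $\{A\in\binom{[n]}{\le r}:G\subseteq A\text{ for some }G\in\mathcal{F}_1\}$ does not change $\tau_{\le s}(\mathcal{F}_1)$ and only increases $|\mathcal{F}_1|$. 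Hence for the upper bound we may assume $\mathcal{F}_1$ is upward closed inside $\binom{[n]}{\le r}$ and $\mathcal{F}_2=\tau_{\le s}(\mathcal{F}_1)$. Writing $\mathcal{G}$ for the antichain of minimal members of $\mathcal{F}_1$ (so every $G\in\mathcal{G}$ has $|G|\le r$), the theorem becomes the statement
\[
\Bigl|\Bigl\{A\in\binom{[n]}{\le r}:G\subseteq A\ \text{for some}\ G\in\mathcal{G}\Bigr\}\Bigr|+\bigl|\tau_{\le s}(\mathcal{G})\bigr|\ \le\ 1+\sum_{i=1}^{s}\Bigl(\binom{n}{i}-\binom{n-r}{i}\Bigr),
\]
subject to the condition $\tau_{\le s}(\mathcal{G})\neq\emptyset$ (i.e.\ $\mathcal{G}$ has a transversal of size at most $s$). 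This condition encodes $\mathcal{F}_2\neq\emptyset$ and is indispensable: without it a very spread-out $\mathcal{G}$ would violate the bound, and it is precisely what makes $\mathcal{G}=\{[r]\}$ extremal.

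When $|\mathcal{G}|=1$, say $\mathcal{G}=\{G_0\}$ with $a:=|G_0|\le r$, the left-hand side equals $\sum_{j=0}^{r-a}\binom{n-a}{j}+\sum_{i=1}^{s}\bigl(\binom{n}{i}-\binom{n-a}{i}\bigr)$, and after simplification the desired inequality reduces to the elementary estimate $\sum_{i=1}^{s}\binom{n-r}{i}\le\sum_{i=r-a+1}^{s}\binom{n-a}{i}$, which holds for all $n$ whenever $1\le r\le s$; equality holds when $a=r$, and taking $G_0=[r]$ recovers the extremal configuration of the theorem.

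For $|\mathcal{G}|\ge 2$ I would induct on $|\mathcal{G}|$. Deleting one member $G\in\mathcal{G}$ changes the left-hand side of the displayed inequality by exactly
\[
\Bigl|\Bigl\{A\in\binom{[n]}{\le r}:G\subseteq A,\ G'\not\subseteq A\ \forall G'\in\mathcal{G}\setminus\{G\}\Bigr\}\Bigr|-\Bigl|\Bigl\{T\in\tau_{\le s}(\mathcal{G}\setminus\{G\}):T\cap G=\emptyset\Bigr\}\Bigr|,
\]
so if some $G$ makes this quantity nonpositive, the left-hand side does not increase under the deletion and we finish by induction (the smaller antichain still has a transversal of size at most $s$, namely any element of $\tau_{\le s}(\mathcal{G})$). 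The crux — and the step I expect to be the main obstacle — is the ``irredundant'' case, where this quantity is strictly positive for every $G\in\mathcal{G}$. Here I would argue that irredundancy, together with $\tau_{\le s}(\mathcal{G})\neq\emptyset$, forces $\mathcal{G}$ to concentrate on a small common core; in the extreme case $\bigcap_{G\in\mathcal{G}}G\neq\emptyset$, so that $\mathcal{G}$ lies inside a principal star $\{G\in\binom{[n]}{\le r}:x\in G\}$, and then both summands on the left become explicit binomial sums that are strictly dominated by those arising from the star at $\{x\}$, comfortably beating the target bound. Carrying out this structural dichotomy cleanly, and verifying the attendant binomial inequalities uniformly over all $n\ge 1$, is where the real work lies. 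An alternative route would be to first symmetrize $\mathcal{F}_1$ and $\mathcal{F}_2$ by simultaneous shifting — which preserves cross-intersection, non-emptiness, and both cardinalities — so that only highly structured $\mathcal{G}$ remain to be analysed, or to invoke the generating-set machinery used by Frankl, Liu, Wang and Yang for a more uniform argument.
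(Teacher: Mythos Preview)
The paper does not give its own proof of this statement; it is quoted from Borg--Feghali as background. The paper's main theorem is a related generalization (under the added hypothesis $n\ge k_1+k_2$), proved via L-initial compression and the Ahlswede--Khachatrian generating-set method, so the only ``in-house'' argument available for comparison is that machinery applied in a broader setting.

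On your proposal itself: the reductions to an upward-closed $\mathcal{F}_1$ with $\mathcal{F}_2=\tau_{\le s}(\mathcal{F}_1)$ and the base case $|\mathcal{G}|=1$ are fine. The gap is entirely in the inductive step. There is first a minor sign slip: when your displayed difference is nonpositive the left-hand side does not \emph{decrease} under deletion (you wrote ``does not increase''), and that is the direction you actually need for the induction.

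The serious problem is your treatment of the ``irredundant'' case. You flag it as the crux, but the proposed resolution is wrong, not merely incomplete. The assertion that irredundancy forces $\mathcal{G}$ onto a small common core---let alone $\bigcap_{G\in\mathcal{G}}G\neq\emptyset$---is false. Take $n=10$, $r=s=5$ and $\mathcal{G}=\{\{1\},\{2\},\{3\},\{4\}\}$. For any $\{j\}\in\mathcal{G}$ the first quantity equals $\sum_{i=0}^{4}\binom{6}{i}=57$ while the second equals $\sum_{i=0}^{2}\binom{6}{i}=22$, so \emph{every} deletion strictly decreases the left-hand side; yet the members of $\mathcal{G}$ are pairwise disjoint. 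Your fallback---that in such a situation the sum is dominated by the one-point star---is also false here: $|\mathcal{F}_1|+|\mathcal{F}_2|=575+7=582$, whereas the one-point star gives only $2\sum_{j=0}^{4}\binom{9}{j}=512$ (the Borg--Feghali bound is $607$, which of course still holds). So the structural dichotomy you sketch does not exist, and the induction has nowhere to land. The ``alternative'' routes you list in your last sentence---simultaneous shifting to left-compressed families followed by generating-set bookkeeping---are not stylistic options but the actual missing engine; that is exactly how the paper and the references it cites attack this circle of problems.
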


Shi, Frankl, and Qian \cite{shi2022non} obtained a similar upper bound when an arbitrary number of cross-intersecting families are involved, and also characterized the extremal families $\mathcal{F}_1, \mathcal{F}_2,\ldots,\mathcal{F}_m$.

\begin{theorem}{\em \cite{shi2022non}\label{qianthm}}
    Let $m\geq2$ and $n,k$ be positive integers. Let $\mathcal{F}_1,\mathcal{F}_2,\ldots,\mathcal{F}_m\subseteq \binom{[n]}{k}$ be non-empty cross-intersecting families. If $n\geq 2k$, then 
    \begin{equation*}
    \sum_{j=1}^{m}|\mathcal{F}_j|\leq max \left\{ m\binom{n-1}{k-1}, \binom{n}{k}-\binom{n-k}{k}+m-1\right\}.   
    \end{equation*}
\end{theorem}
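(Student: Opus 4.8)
The plan is to argue by induction on $m$. The base case $m=2$ is exactly the Hilton--Milner bound recorded above: for $n\ge 2k$ one checks $2\binom{n-1}{k-1}\le\binom{n}{k}-\binom{n-k}{k}+1$ (with equality precisely when $n=2k$), so the maximum in the statement coincides with $\binom{n}{k}-\binom{n-k}{k}+1$ when $m=2$, which is what the quoted theorem provides. For the inductive step, fix non-empty cross-intersecting families $\mathcal{F}_1,\dots,\mathcal{F}_m\subseteq\binom{[n]}{k}$ with $m\ge 3$, and split into cases according to how ``spread out'' the families are. Throughout, for a $k$-set $A$ write $\mathcal{N}(A)$ for the family of all $k$-subsets of $[n]$ meeting $A$, so that $|\mathcal{N}(A)|=\binom{n}{k}-\binom{n-k}{k}$.

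I would first dispose of the case where some $\mathcal{F}_i$ is a single $k$-set, say $\mathcal{F}_i=\{A\}$. Then every $\mathcal{F}_j$ with $j\ne i$ lies inside $\mathcal{N}(A)$, and the $m-1\ge 2$ families $\{\mathcal{F}_j:j\ne i\}$ are non-empty and cross-intersecting, so the induction hypothesis applies to them; adding back $\{A\}$ and using $1+(m-1)\binom{n-1}{k-1}\le m\binom{n-1}{k-1}$ (since $\binom{n-1}{k-1}\ge 1$) together with $1+\binom{n}{k}-\binom{n-k}{k}+m-2=\binom{n}{k}-\binom{n-k}{k}+m-1$ yields the desired bound. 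So we may assume $|\mathcal{F}_j|\ge 2$ for every $j$. The second case is that every $\mathcal{F}_j$ is an intersecting family: then the Erd\H{o}s--Ko--Rado theorem gives $|\mathcal{F}_j|\le\binom{n-1}{k-1}$ for each $j$, hence $\sum_j|\mathcal{F}_j|\le m\binom{n-1}{k-1}$ and we are done.

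The remaining, and main, case is that $|\mathcal{F}_j|\ge 2$ for all $j$ but some $\mathcal{F}_{i_0}$ is \emph{not} intersecting, containing disjoint sets $A,A'$; then every $\mathcal{F}_j$ with $j\ne i_0$ is forced into the much smaller family $\mathcal{D}:=\mathcal{N}(A)\cap\mathcal{N}(A')$. To finish I would combine three ingredients: (i) a size estimate showing $|\mathcal{D}|$ is substantially below $\binom{n}{k}-\binom{n-k}{k}$, the ratio $|\mathcal{D}|/\binom{n-1}{k-1}$ being bounded and worst at $n=2k$; (ii) the $m-1$ families $\{\mathcal{F}_j:j\ne i_0\}$ are pairwise cross-intersecting inside $\mathcal{D}$, so by a Hilton--Milner-type estimate — or a secondary induction on $m$ restricted to this case — at most one of them is large while the rest are forced to be (near-)singletons; and (iii) once two of the $\mathcal{F}_j$ are simultaneously non-intersecting the whole system becomes so rigid that $\binom{n}{k}-\binom{n-k}{k}+m-1$ is met with slack. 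The main obstacle is precisely this case: the naive pairwise bound $|\mathcal{F}_i|+|\mathcal{F}_j|\le\binom{n}{k}-\binom{n-k}{k}+1$ summed over pairs overshoots, so one must genuinely exploit the confinement of all but one family to $\mathcal{D}$, and it is the boundary regime $n=2k$ — where $\mathcal{D}$ is as large as it can be relative to $\binom{n-1}{k-1}$ — that both forces the extremal configuration ($m-1$ copies of one $k$-set $A$ together with the full family $\mathcal{N}(A)$) and demands the most careful bookkeeping. Tracking the equality conditions through each case then yields the characterization of the extremal families.
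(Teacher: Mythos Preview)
Your induction handles Cases~1 and~2 cleanly, but Case~3 is a genuine gap: ingredient~(ii), that among the $m-1$ families confined to $\mathcal{D}=\mathcal{N}(A)\cap\mathcal{N}(A')$ ``at most one of them is large while the rest are forced to be (near-)singletons,'' is simply false. At $n=2k$, take $\mathcal{F}_{i_0}=\{A,\overline{A}\}$ and let every other $\mathcal{F}_j$ equal a common intersecting family $\mathcal{I}\subseteq\binom{[2k]}{k}\setminus\{A,\overline{A}\}$ of size $\binom{2k-1}{k-1}-1$, obtained by selecting one member of each complementary pair except $\{A,\overline{A}\}$. These families are non-empty cross-intersecting, all have size at least $2$, and \emph{every one} of the $m-1$ confined families is large; no Hilton--Milner-type estimate or secondary induction will manufacture the near-singleton structure you invoke, because it is not present. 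More concretely, even in the favourable sub-case where all $\mathcal{F}_j$ with $j\ne i_0$ are intersecting, the only bounds your toolkit supplies are one pairwise Hilton--Milner inequality plus EKR on the rest, giving $\sum_j|\mathcal{F}_j|\le\bigl(\binom{n}{k}-\binom{n-k}{k}+1\bigr)+(m-2)\binom{n-1}{k-1}$; for $k\ge 3$ and $n>2k$ this strictly exceeds both $m\binom{n-1}{k-1}$ and $\binom{n}{k}-\binom{n-k}{k}+m-1$. What is missing is a device that trades off $|\mathcal{F}_{i_0}|$ against the sizes of \emph{all} the remaining families simultaneously, and induction on $m$ does not provide one.

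The paper does not prove this theorem separately; it is recovered as the special case $R_j=\{k\}$ of Theorem~\ref{maintheorem}. That proof first passes (via Theorem~\ref{keyleftlemma}) to $L$-initial, monotone, left-compressed families and then analyses their generating families directly. The pivotal step, Lemma~\ref{keylemma}, is a structural constraint on generating sets of maximal extent which, together with the log-concavity of binomial coefficients, forces the extent to be $1$ or $k$ --- exactly the global, all-families-at-once control your inductive scheme lacks.
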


Shi, Frankl, and Qian \cite{shi2022non} also posed the following problem on non-uniform cross-intersecting families.

\begin{problem}{\em \cite{shi2022non}\label{Qianproblem}}
Let $m\geq 2$ and $n,k_1,k_2,\ldots,k_m$ be positive integers. Let $\mathcal{F}_1\subseteq \binom{[n]}{k_1},\mathcal{F}_2\subseteq \binom{[n]}{k_2},\ldots,\mathcal{F}_m\subseteq \binom{[n]}{k_m}$ be non-empty cross-intersecting families with $k_1\geq k_2\geq\cdots\geq k_m$ and $n\geq k_1+k_2$. Is it true that
\begin{equation*}
    \sum_{j=1}^{m}|\mathcal{F}_j|\leq max \left\{ \sum_{j=1}^{m}\binom{n-1}{k_j-1}, \binom{n}{k_1}-\binom{n-k_m}{k_1}+\sum_{i=2}^{m}\binom{n-k_m}{k_i-k_m}\right\}?
\end{equation*}
\end{problem}

Recently, Huang and Peng \cite{huang2023maximum}, as well as Zhang and Feng \cite{zhang2024note}, independently proved the above inequality using different methods. In this paper, we will give a common generalization of the above results.

Given positive integers $n,k$ with $k\leq n$ and a subset $R$ of $[n]$, define the families
\begin{equation*}
    \mathcal{M}_1(n,R,[k])=\{A\in\binom{[n]}{R}:|A\bigcap [k]|\geq 1\},
\end{equation*}
\begin{equation*}
    \mathcal{M}_2(n,R,[k])=\{B\in\binom{[n]}{R}:[k]\subseteq B\},
\end{equation*}
\begin{equation*}
   \mathcal{S}(n,R)=\{C\in\binom{[n]}{R}:1\in C\}.
\end{equation*}
It is easily checked that $\mathcal{M}_1(n,R,[k])$ and $\mathcal{M}_2(n,R,[k])$ are cross-intersecting. Below is our main result.

\begin{theorem}\label{maintheorem}
    Let $m\geq 2$, $n$ be positive integers, and $R_i=\{k_{i,1} >k_{i,2} >\cdots> k_{i,t_i}\}$ be subsets of $[n]$ for all $i=1,2,\ldots,m$. Let $\mathcal{F}_1\subseteq \binom{[n]}{R_1},\mathcal{F}_2\subseteq \binom{[n]}{R_2},\ldots,\mathcal{F}_m\subseteq \binom{[n]}{R_m}$ be non-empty cross-intersecting families such that $k_1$ (respectively, $k_2$) is the largest (respectively, second largest) value in $\{k_{1,1},k_{2,1},\ldots,k_{m,1}\}$ and $k_{min}^{\gamma}=min\{x\in R_j:j\in [m]\setminus\{\gamma\}\}$.
    If $n\geq k_1+k_2$, then 
$$
\sum_{j=1}^{m}|\mathcal{F}_j|\leq max\left\{\sum_{j=1}^{m}|\mathcal{S}(n,R_j)|
,|\mathcal{M}_{1}(n,R_{\gamma},[k_{min}^{\gamma}])|+\sum_{\alpha\in[m]\setminus\{\gamma\}}|\mathcal{M}_{2}(n,R_{\alpha},[k_{min}^{\gamma}])|:\gamma\in[m]
\right\}.
$$
Equality holds if and only if, up to permutations of the elements in $[n]$, one of the following holds:
\begin{itemize}
    \item[(i)] if $\sum_{j=1}^{m}|\mathcal{S}(n,R_j)|\geq  |\mathcal{M}_{1}(n,R_{\gamma},[k_{min}^{\gamma}])|+\sum_{\alpha\in[m]\setminus\{\gamma\}}|\mathcal{M}_{2}(n,R_{\alpha},[k_{min}^{\gamma}])|$ for all $\gamma\in[m]$, then $\mathcal{F}_j=\mathcal{S}(n,R_j)$ for all $j\in [m]$; 
    \item[(ii)] if $\sum_{j=1}^{m}|\mathcal{S}(n,R_j)|\leq  |\mathcal{M}_{1}(n,R_{\gamma},[k_{min}^{\gamma}])|+\sum_{\alpha\in[m]\setminus\{\gamma\}}|\mathcal{M}_{2}(n,R_{\alpha},[k_{min}^{\gamma}])|$ for some $\gamma\in[m]$, then $\mathcal{F}_{\gamma}=\mathcal{M}_{1}(n,R_{\gamma},[k_{min}^{\gamma}])$ and $\mathcal{F}_{\alpha}=\mathcal{M}_{2}(n,R_{\alpha},[k_{min}^{\gamma}])$ for all $\alpha\in[m]\setminus\{\gamma\}$;
    \item[(iii)] if $n=k_1+k_2$, $m=2$, $R_1=\{k_1\}$ and $R_2=\{k_2\}$, then $\mathcal{F}_1=\binom{[n]}{k_1}\setminus{\overline{\mathcal{F}_2}}$ and $\mathcal{F}_2\subseteq\binom{[n]}{k_2}$ with $0<|\mathcal{F}_2|<\binom{n}{k_2}$; (here $\overline{\mathcal{F}_2}$ is the complement of ${\mathcal F}_2$.)
    \item[(iv)] if $n=k_1+k_2$, $m\geq 3$ and $R_i=\{k\}$ for all $i\in[m]$, then $\mathcal{F}_j=\mathcal{F}$ for all $j\in[m]$ where $\mathcal{F}$ is an
    intersecting family with $|\mathcal{F}|=\binom{n-1}{k-1}$.
\end{itemize}
\end{theorem}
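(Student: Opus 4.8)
The plan is to reduce the non-uniform problem to the uniform one via the \emph{generating set method}, following the strategy used by Frankl--Liu--Wang--Yang and Li--Liu--Song--Yao but now for $m$ families. Recall that for a family $\mathcal{F}\subseteq\binom{[n]}{R}$, a family $\mathcal{G}\subseteq\binom{[n]}{\le\max R}$ is a \emph{generating set} if every $A\in\mathcal{F}$ contains some $G\in\mathcal{G}$, and if $\mathcal{G}$ is ``close'' (all members among the sets that genuinely contribute) one can recover $|\mathcal{F}|$ and the cross-intersecting condition in terms of $\mathcal{G}$. Concretely, I would first show that for each $i$ one may replace $\mathcal{F}_i$ by its ``up-closure within $\binom{[n]}{R_i}$'' and then pass to the minimal generating set $\mathcal{G}_i$; crucially, the cross-intersecting property of the $\mathcal{F}_i$'s is inherited by the $\mathcal{G}_i$'s (if $G\in\mathcal{G}_i$ and $G'\in\mathcal{G}_j$ were disjoint, then extending $G,G'$ to sets of sizes $k_{i,1},k_{j,1}$ inside $[n]$ — possible since $n\ge k_1+k_2\ge k_{i,1}+k_{j,1}$ — produces disjoint members of $\mathcal{F}_i,\mathcal{F}_j$). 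This turns the problem into: maximize $\sum_j |\mathcal{F}_j|$ where $|\mathcal{F}_j|$ is a monotone functional of the generating set $\mathcal{G}_j$, over cross-intersecting $\mathcal{G}_j$ of bounded rank.

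Next I would set up a compression/shifting argument to reduce to a very structured configuration. Apply the standard shift operators $S_{ab}$ simultaneously to all $\mathcal{F}_i$; these preserve the cross-intersecting property and the sizes $|\mathcal{F}_i|$, so we may assume every $\mathcal{F}_i$ is shifted. Then I would argue a dichotomy: either all the $\mathcal{F}_i$ have a common element (after shifting, the element $1$), in which case $\mathcal{F}_i\subseteq\mathcal{S}(n,R_i)$ and the first term of the bound applies; or some $\mathcal{F}_\gamma$ contains a set avoiding $1$. In the latter case, because all families must cross-intersect this set and $n\ge k_1+k_2$, a kernel/sunflower-type argument (in the spirit of Hilton--Milner and of Shi--Frankl--Qian's proof of Theorem~\ref{qianthm}) forces the other families $\mathcal{F}_\alpha$, $\alpha\ne\gamma$, to be concentrated on a small common ``core'' set of size $k_{min}^{\gamma}$, i.e. essentially $\mathcal{F}_\alpha\subseteq\mathcal{M}_2(n,R_\alpha,[k_{min}^{\gamma}])$ up to relabeling, while $\mathcal{F}_\gamma\subseteq\mathcal{M}_1(n,R_\gamma,[k_{min}^{\gamma}])$; summing gives the second term. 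The choice of $k_{min}^{\gamma}$ as the minimum block size among the \emph{other} families is exactly what makes $\mathcal{M}_2$ nonempty and the bound tight, which is the combinatorial heart of why the index $\gamma$ appears.

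For the characterization of equality, I would trace through each inequality used. The shift operators can be undone only if the original families were already ``compressed'' in the relevant directions, which up to a permutation of $[n]$ forces the $\mathcal{S}$- or $\mathcal{M}_1/\mathcal{M}_2$-structure in cases (i) and (ii); here one has to check that when the two terms in the max are \emph{equal} both extremal configurations occur, and that no mixed configuration does. The two genuinely special boundary cases (iii) and (iv) arise precisely when $n=k_1+k_2$ and all blocks are a single value $k$: then being cross-intersecting for two $k$-families with $n=2k$ is equivalent to $\mathcal{F}_1\subseteq\binom{[n]}{k}\setminus\overline{\mathcal{F}_2}$ (disjointness of $A$ and $B$ means $B=[n]\setminus A$), giving the flexible family in (iii); and for $m\ge 3$ uniform $k$-families with $n=2k$, pairwise cross-intersection forces $\mathcal{F}_i=\mathcal{F}_j$ for all $i,j$ and each is then a maximum intersecting family of size $\binom{n-1}{k-1}$ by the EKR theorem, which is (iv). I would handle these by isolating the case $n=2k$, $R_i=\{k\}$ at the outset and applying Theorem~\ref{qianthm} together with its known extremal analysis.

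The main obstacle I expect is the second (non-trivial) case of the dichotomy: controlling \emph{several} cross-intersecting families simultaneously once one of them fails to be a star. For two families this is a Hilton--Milner-type stability statement, but with $m\ge 3$ one must rule out configurations where, say, two families share one core and a third shares a different core, and one must also verify that the functional $\sum_j|\mathcal{F}_j|$ — which for non-uniform $R_j$ is a weighted count over several block sizes — is still maximized by the clean $\mathcal{M}_1/\mathcal{M}_2$ configuration rather than by some intermediate family; this weighted optimization, carried out via the generating-set reformulation, is where the bulk of the technical work (and the careful comparison of binomial-coefficient sums defining $k_{min}^{\gamma}$) will lie.
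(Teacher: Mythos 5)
Your outline reproduces the architecture of the paper's proof --- generating sets inheriting the cross-intersecting property (your disjoint-extension argument is exactly Lemma \ref{two-lemma}(i)), shifting to a left-compressed and L-initial situation, a dichotomy between the star configuration and a ``core'' configuration, and isolating the boundary cases at $n=k_1+k_2$ --- but it omits the two steps that constitute the actual content of the argument, and the second branch of your dichotomy does not go through as described. Once some $\mathcal{F}_\gamma$ is large enough to contain the full star $\langle\{1\}\rangle_{R_\gamma}$ (so that every member of every other family contains $1$), it is \emph{not} true that a ``kernel/sunflower-type argument'' immediately concentrates the remaining families on a common core: the generating family $\mathcal{G}_\gamma$ may contain sets of any intermediate size $u$ with $1<u<l$ (where $l$ is the extent), and such configurations are perfectly cross-intersecting without being of the $\mathcal{M}_1/\mathcal{M}_2$ form for any core. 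The paper eliminates them by a two-sided perturbation (Lemmas \ref{new-cross-inter1} and \ref{new-cross-inter2}): one enlarges $\mathcal{F}_\gamma$ by deleting $l$ from its boundary generating sets while removing the matching boundary part of each $\mathcal{F}_\alpha$, and symmetrically in the other direction; maximality yields the two inequalities (\ref{equ-1}) and (\ref{equ-2}), whose \emph{product} contradicts the strict log-concavity of binomial coefficients when $n>k_1+k_2$, and whose equality analysis at $n=k_1+k_2$ is precisely what produces cases (iii) and (iv). This in turn rests on the pairing lemma for boundary generating sets (Lemma \ref{keylemma}: $E\cap F=\{l\}$ forces $E\cup F=[l]$ and $|E|+|F|=l+1$), for which your proposal offers no substitute.

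The second missing step is the one you explicitly defer: after the configuration is reduced to $\mathcal{M}_1(n,R_\gamma,[l])$ together with $\mathcal{M}_2(n,R_\alpha,[l])$ for some $1\le l\le k_{min}^{\gamma}$, one must still show that the total size $F_\gamma(l)$, as a function of the core size $l$, is maximized only at an endpoint $l\in\{1,k_{min}^{\gamma}\}$; this is not automatic for non-uniform $R_j$ and is handled in the paper by Lemma \ref{maximal-f}, again via the product-of-two-local-inequalities and log-concavity device. Without these two ingredients the proposal is a correct roadmap in the same direction as the paper, but not a proof.
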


Note that in the special case where $R_j = \{k\}$ for all $j \in [m]$, Theorem \ref{maintheorem} reduces to Theorem \ref{qianthm}; and when $R_j = \{k_j\}$ for all $j \in [m]$, Theorem \ref{maintheorem} offers a positive answer to Problem \ref{Qianproblem}.

The methods employed in this paper are the shifting method and the generating set method. The definitions of the shifting operation and generating sets, along with their basic properties, are introduced in the next section. In Section 3, we give the detailed proof of Theorem \ref{maintheorem}. Finally, in Section 4, we discuss directions for future work. 

\section{Preliminaries}
Let $\mathcal{F}$ be a family in $\binom{[n]}{k}$. For $i,j\in[n]$ and $A\in \mathcal{F}$, define
$$
s_{i,j}(A)=\begin{cases}\begin{array}{cc}
    (A\setminus\{j\})\bigcup\{i\},&\text{if } j\in A,i\notin A, (A\setminus\{j\})\bigcup\{i\}\notin \mathcal{F};\\
    A,&\text{otherwise}
    \end{array}\end{cases}
$$
and $s_{i,j}(\mathcal{F})=\{s_{i,j}(A):A\in\mathcal{F}\}$. It is well known that
$s_{i,j}$ preserves the size $|\mathcal{F}|$, the intersecting property, and  the cross-intersecting property. We apply the shifting operations to $\mathcal{F}$ for all $1\leq i<j\leq n$ until we obtain a family $\mathcal{F}'$ such that $s_{i,j}(\mathcal{F}')=\mathcal{F}'$ for all $1\leq i<j\leq n$; such an $\mathcal{F}'$ is called \emph{left-compressed}. It is clear that $s_{i,j}(\mathcal{F}_1)$ and $s_{i,j}(\mathcal{F}_2)$ are cross-intersecting if and only if $\mathcal{F}_1$ and $\mathcal{F}_2$ are cross-intersecting. For more details, see the survey by Frankl \cite{frankl1987shifting}.

For a subset  $R\subseteq[n]$, a family $\mathcal{F}\subseteq\binom{[n]}{R}$ is called \emph{monotone} if $A\in\mathcal{F},A\subseteq B$ and $|B|\in R$ imply $B\in \mathcal{F}$. Denote $(\mathcal{F})_r=\{F\in\mathcal{F}:|F|=r\}$. Given a family $\mathcal{A}\subseteq\binom{[n]}{R}$, the \emph{up-set} of $\mathcal{A}$, denoted by $\langle{\mathcal{A}}\rangle_{R}$, is defined as follows:
\begin{equation*}
   \langle{\mathcal{A}}\rangle_{R}=\{F\in\binom{[n]}{R}:\text{there exists $A\in\mathcal{A}$ such that $A\subseteq F$}\}. 
\end{equation*}

Our proof is based on the generating set method, as outlined in \cite{ahlswede1997complete,FRANKL2024124,zhang2025cross-t-intersecting}. We will recall some well-known concepts related to the generating set method.

Let $\mathcal{F}\subseteq\binom{[n]}{R}$ be a monotone family with max$(R)=r$. A \emph{generating set} of $\mathcal{F}$ is a minimal (with respect to set containment) set $E\in\binom{[n]}{\leq r}$ such that $\langle{E}\rangle_R\subseteq \mathcal{F}$.  The \emph{generating family} of $\mathcal{F}$ consists of all generating sets of $\mathcal{F}$. The extent of $\mathcal{F}$ is defined as the maximal element appearing in a generating set of $\mathcal{F}$. For subsets $E,R\subseteq[n]$, denote $\mathscr{D}_R(E)=\{A\in\binom{[n]}{R}:A\bigcap [\text{max}(E)]=E\}$.  

The following properties of the generalized generating families introduced above are analogous to those of the generating sets discussed in \cite{ahlswede1997complete} and can be derived in a similar manner. 

\begin{lemma}\label{two-lemma}
Let $R_1,R_2$ be subsets of $[n]$, $\text{max}(R_1)=k_1$ and $\text{max}(R_2)=k_2$. Suppose that $\mathcal{F}_1\subseteq\binom{[n]}{R_1}(\text{respectively, }\mathcal{F}_2\subseteq\binom{[n]}{R_2})$ is a monotone and left-compressed family with the generating family $\mathcal{G}_1(\text{respectively, }\mathcal{G}_2)$ and extent $l_1$.
If $\mathcal{F}_1\subseteq\binom{[n]}{R_1}$ and $\mathcal{F}_2\subseteq\binom{[n]}{R_2}$ are non-empty and cross-intersecting and $n\geq k_1+k_2$, then
\begin{itemize}
    \item[(i)] $|E_1\bigcap E_2|\geq1$ for all $E_1\in \mathcal{G}_1$ and $E_2\in \mathcal{G}_2$;
    \item[(ii)] for any $1\leq x<y\leq l_1$ and $E\in \mathcal{G}_1$, we have $E_0\subseteq s_{x,y}(E)$ for some $E_0\in \mathcal{G}_1$.
\end{itemize}
\end{lemma}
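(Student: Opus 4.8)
\textbf{Proof proposal for Lemma \ref{two-lemma}.}

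The plan is to derive both items from the defining properties of generating families together with the cross-intersecting hypothesis, via the standard ``blow-up'' trick on disjoint sets when $n$ is large enough. For part (i), suppose for contradiction that some $E_1\in\mathcal G_1$ and $E_2\in\mathcal G_2$ are disjoint. Since $E_1$ is a generating set of $\mathcal F_1$, we have $\langle E_1\rangle_{R_1}\subseteq\mathcal F_1$, and in particular $\mathcal F_1$ contains a set $A\supseteq E_1$ with $|A|=k_1=\max(R_1)$; similarly $\mathcal F_2$ contains a set $B\supseteq E_2$ with $|B|=k_2=\max(R_2)$. Here I use that $R_1,R_2$ are nonempty (guaranteed since $\mathcal F_1,\mathcal F_2$ are nonempty) so the maxima exist, and that one can enlarge $E_i$ inside $[n]$ to a set of size $\max(R_i)$, which is possible because $|E_i|\le\max(R_i)\le n$. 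The point is that $|E_1|+|E_2|\le k_1+k_2\le n$, so $E_1$ and $E_2$ being disjoint can be extended to \emph{disjoint} sets $A\in\mathcal F_1$ and $B\in\mathcal F_2$ of sizes $k_1$ and $k_2$ respectively; this uses $n\ge k_1+k_2$ crucially to find room for the enlargements. Then $A\cap B=\emptyset$ contradicts the cross-intersecting property of $\mathcal F_1,\mathcal F_2$.

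For part (ii), fix $1\le x<y\le l_1$ and $E\in\mathcal G_1$, and consider $E'=s_{x,y}(E)$. If $E'=E$ there is nothing to prove (take $E_0=E$), so assume $y\in E$, $x\notin E$, and $E'=(E\setminus\{y\})\cup\{x\}$. Since $\mathcal F_1$ is left-compressed and monotone, one checks that $\langle E'\rangle_{R_1}\subseteq\mathcal F_1$: any $F\in\langle E'\rangle_{R_1}$ contains $E'$, and by applying the shift $s_{x,y}$ at the level of sets together with left-compressedness of $\mathcal F_1$ one shows $F\in\mathcal F_1$ — more precisely, if $y\notin F$ then $(F\setminus\{x\})\cup\{y\}\supseteq E$ lies in $\mathcal F_1$, hence $F=s_{x,y}\big((F\setminus\{x\})\cup\{y\}\big)\in\mathcal F_1$ by left-compressedness, and if $y\in F$ then $F\supseteq E'\cup\{y\}\supseteq E$ already lies in $\mathcal F_1$ by monotonicity. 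Therefore $E'$ contains a generating set $E_0\in\mathcal G_1$, which is exactly the assertion $E_0\subseteq s_{x,y}(E)$. The restriction $y\le l_1$ (the extent) is what ensures this shift does not move the ``top'' coordinate in a way that breaks the bookkeeping for $\mathscr D_{R_1}$-decompositions used elsewhere; here it is harmless.

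The main obstacle, and the only place requiring genuine care, is part (i): one must be sure that after enlarging $E_1$ and $E_2$ to the top sizes $k_1,k_2$ they can simultaneously be kept disjoint. This is precisely where the hypothesis $n\ge k_1+k_2$ enters, and it is worth spelling out that $|E_1|\le k_1$ and $|E_2|\le k_2$ (each generating set lies in $\binom{[n]}{\le\max(R_i)}$), so after removing $E_1\cup E_2$ from $[n]$ there remain at least $n-k_1-k_2+|E_1\cap E_2|\ge 0$ elements — and since we assumed $E_1\cap E_2=\emptyset$, at least $n-|E_1|-|E_2|\ge n-k_1-k_2\ge0$ free elements, enough to pad $E_1$ up to size $k_1$ and $E_2$ up to size $k_2$ disjointly. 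Everything else is a routine transcription of the arguments in \cite{ahlswede1997complete}; the non-uniform setting changes nothing essential because we only ever work with the top layers $r=\max(R_i)$.
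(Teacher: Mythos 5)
Your proof is correct and follows essentially the same approach as the paper: for (i) you pad the disjoint generating sets up to disjoint top-layer sets of sizes $k_1$ and $k_2$ (using $n\geq k_1+k_2$) to contradict cross-intersection, and for (ii) you use left-compressedness to show $\langle s_{x,y}(E)\rangle_{R_1}\subseteq\mathcal F_1$. In fact your case analysis in (ii) is more careful than the paper's one-line assertion that $s_{x,y}(E)\subseteq s_{x,y}(F)\in\mathcal F_1$ already implies the whole up-set lies in $\mathcal F_1$; your argument fills in exactly the step the paper leaves implicit.
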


\begin{proof}
    \begin{itemize}
        \item[(i)] If not, since $n\geq k_1 + k_2$, there exist $F_1 \in (\mathcal{F}_1)_{k_1}$ and $F_2 \in (\mathcal{F}_2)_{k_2}$ such that $E_1 \subseteq F_1,E_2 \subseteq F_2$, and $F_1 \bigcap F_2 = \emptyset$, contradicting that $\mathcal{F}_1$ and $\mathcal{F}_2$ are cross-intersecting.
        
        \item[(ii)] By the definition of generating sets, there exists an
        $F \in \mathcal{F}_1$ such that $E\subseteq F$. Applying the shifting operation $s_{x,y}$, we have $s_{x,y}(E) \subseteq s_{x,y}(F) \in \mathcal{F}_1$ since $s_{x,y}$ preserves $\mathcal{F}_1$. It follows that $\langle{s_{x,y}(E)}\rangle_{R}\subseteq \mathcal{F}_1$. Thus, there exists a generating set $E_0$ in $\mathcal{G}_1$ such that $E_0 \subseteq s_{x,y}(E)$.
    \end{itemize}
\end{proof}

\begin{lemma}\label{basiclemma}
Let $\mathcal{F}\subseteq\binom{[n]}{R}$ be a monotone family  
with the generating family $\mathcal{G}$.
Then $\mathcal{F}=\bigcup_{E\in \mathcal{G}}\langle{E}\rangle_{R}$, denoted by $\langle{\mathcal{G}}\rangle_{R}$. Furthermore, if $\mathcal{F}$ is left-compressed, then it is a disjoint union 
\begin{equation*}
    \mathcal{F}=\dot{\bigcup_{E\in\mathcal{G}}} \mathscr{D}_R(E).
\end{equation*}
\end{lemma}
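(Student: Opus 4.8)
The plan is to prove the two assertions in turn, the first by a direct double-inclusion argument and the second by analyzing how the sets $\mathscr{D}_R(E)$ partition $\mathcal{F}$ once left-compressedness is available. For the equality $\mathcal{F}=\bigcup_{E\in\mathcal{G}}\langle E\rangle_R$: the inclusion $\supseteq$ is immediate from the defining property $\langle E\rangle_R\subseteq\mathcal{F}$ of a generating set. For $\subseteq$, take $F\in\mathcal{F}$; since $\mathcal{F}$ is monotone and $F$ itself (or rather the family $\langle F\rangle_R$) lies in $\mathcal{F}$, there is a minimal subset $E\subseteq F$ with $\langle E\rangle_R\subseteq\mathcal{F}$, and by minimality $E\in\mathcal{G}$; then $E\subseteq F$ gives $F\in\langle E\rangle_R$. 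One should note $|E|\le\max(R)$ automatically because $E\subseteq F$ and $|F|\in R$, so $E\in\binom{[n]}{\le r}$ as required.

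For the disjoint-union statement, assume $\mathcal{F}$ is left-compressed. First I would show $\mathscr{D}_R(E)\subseteq\mathcal{F}$ for every $E\in\mathcal{G}$: if $A\in\mathscr{D}_R(E)$ then $A\cap[\max(E)]=E$, so in particular $E\subseteq A$ and hence $A\in\langle E\rangle_R\subseteq\mathcal{F}$. This already yields $\bigcup_{E\in\mathcal{G}}\mathscr{D}_R(E)\subseteq\mathcal{F}$. For the reverse inclusion, take $A\in\mathcal{F}$; by the first part there is $E\in\mathcal{G}$ with $E\subseteq A$, and among all such generating sets choose one, call it $E$, with $\max(E)$ as small as possible. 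I claim $A\cap[\max(E)]=E$, which places $A\in\mathscr{D}_R(E)$. Suppose not; then there is some $y\in A\cap[\max(E)]$ with $y\notin E$, and necessarily $y<\max(E)$. Pick $x\in E$ with $x=\max(E)$ (or more carefully, any $x\in E$ with $x>y$; such $x$ exists since $y<\max(E)\in E$). Using left-compressedness via Lemma \ref{two-lemma}(ii) applied to $\mathcal{F}$ and the pair $y<x$, there is $E_0\in\mathcal{G}$ with $E_0\subseteq s_{y,x}(E)$; since $s_{y,x}(E)$ replaces $x$ by the smaller element $y$, we get $\max(E_0)\le\max(s_{y,x}(E))\le\max(E)$, and in fact $E_0\subseteq s_{y,x}(E)\subseteq (A\setminus\{x\})\cup\{y\}\subseteq A$ because $y\in A$. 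A short check shows $\max(E_0)<\max(E)$ (the only way $\max$ is not strictly decreased is if $x$ was not the unique maximum, which it is), contradicting the minimal choice of $\max(E)$. Hence $A\cap[\max(E)]=E$ and $A\in\mathscr{D}_R(E)$.

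Finally, disjointness: if $A\in\mathscr{D}_R(E)\cap\mathscr{D}_R(E')$ with $E,E'\in\mathcal{G}$ and, say, $\max(E)\le\max(E')$, then $E=A\cap[\max(E)]$ and $E'=A\cap[\max(E')]$, so $E=E'\cap[\max(E)]\subseteq E'$; since both are generating sets and $\mathcal{G}$ is an antichain (generating sets are minimal with respect to containment), $E=E'$. This gives the disjoint union $\mathcal{F}=\dot\bigcup_{E\in\mathcal{G}}\mathscr{D}_R(E)$. I expect the main obstacle to be the bookkeeping in the middle step—ensuring that the shifting operation $s_{y,x}$ genuinely produces a generating set with strictly smaller extent and that it still sits inside $A$—so some care is needed there; the rest is routine once the right minimal element $E$ is chosen. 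This lemma is the analogue for $\binom{[n]}{R}$ of the classical generating-set decomposition in \cite{ahlswede1997complete}, and the argument mirrors that one.
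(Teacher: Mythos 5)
Your proof is correct and takes essentially the same route as the paper's: both rest on Lemma \ref{two-lemma}(ii) to replace the maximal element of a generating set by a smaller one, the paper packaging this as an induction on $\max(G)$ and you as an extremal choice of the generating set inside $A$ with smallest maximum. Your antichain argument for disjointness and the monotonicity argument for $\mathcal{F}=\bigcup_{E\in\mathcal{G}}\langle E\rangle_R$ spell out steps the paper declares ``clear,'' and they are sound.
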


\begin{proof}
    It is clear that $\mathcal{F}=\langle{\mathcal{G}}\rangle_{R}$ and ${\bigcup_{E\in\mathcal{G}}} \mathscr{D}_R(E)\subseteq\mathcal{F}$. Since each element in $\mathcal{G}$ is minimal, we see that ${\bigcup_{E\in\mathcal{G}}} \mathscr{D}_R(E)$ is a disjoint union. For any $G\in \mathcal{G}$ and $a\in R$, it suffices to show that $\langle{G}\rangle_{a}\subseteq {\bigcup_{E\in\mathcal{G}}} \mathscr{D}_a(E)$. We prove this by induction on the value max$(G)$. The base case is trivial as $\langle{G}\rangle_{a}=\mathscr{D}_a(G)$ if max$(G)=1$. Assume that  $\langle{G}\rangle_{a}\subseteq {\bigcup_{E\in\mathcal{G}}} \mathscr{D}_a(E)$ for all $G$ with max$(G)\leq \nu$. If max$(G)=\nu+1$, $\langle{G}\rangle_{a}=\{F\in\binom{[n]}{a}:G\subseteq F\}=\{F\in\binom{[n]}{a}:G= F\bigcap [\nu+1]\}\bigcup\{F\in\binom{[n]}{a}:G\subsetneq F\bigcap [\nu+1]\}$. The first part is contained in $\mathscr{D}_a(G)$. Notice that all elements in the second part are contained in $\langle{\{x\}\bigcup G}\rangle_a$ for some $x\in (F\setminus{G})\bigcap{[\nu+1]}$, which is a subfamily of $\langle{s_{x,\nu+1}(G)}\rangle_a$. By Lemma \ref{two-lemma}(ii), there exists a generating set $G_0\in \mathcal{G}$ such that $G_0\subseteq s_{x,\nu+1}(G)$ with $\langle{s_{x,\nu+1}(G)}\rangle_a\subseteq\langle{G_0}\rangle_a$. Since max$(G_0)\leq \nu$, we have $\{F\in\binom{[n]}{a}:G\subsetneq F\bigcap [\nu+1]\}\subseteq {\bigcup_{E\in\mathcal{G}}} \mathscr{D}_a(E)$ by the induction hypothesis. The proof is now complete.  
\end{proof}

Let $\prec_L$ be the lexicographic order on $\binom{[n]}{i}$
where $i\in [n]$, that is, for any
two sets $A, B\in \binom{[n]}{i}$
, $A \prec_L B$ if and only if $\text{min}\{a : a\in A\setminus B\}< \text{min}\{b : b \in B \setminus A\}$.  For a family $\mathcal{F}\subseteq\binom{[n]}{k}$
, let $\mathcal{F}_L$ denote the family consisting of the first $|\mathcal{F}|$ $k$-sets in order $\prec_L$ in $\binom{[n]}{k}$, and $\mathcal{F}$ is \emph{L-initial} if $\mathcal{F}_L=\mathcal{F}$. 
\begin{theorem}\label{keyleftlemma}\cite{hilton1976erdos,ou2005maximum}
    Let $\mathcal{A}\subseteq\binom{[n]}{k_1}$ and $\mathcal{B}\subseteq\binom{[n]}{k_2}$ be cross-intersecting families. Then  $\mathcal{A}_L$ and $\mathcal{B}_L$ are cross-intersecting as well.
\end{theorem}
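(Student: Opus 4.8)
The plan is to prove the statement in two stages: first reduce to left-compressed families by shifting, then bridge the gap from left-compressed to L-initial using a Kruskal--Katona-type argument. For the first stage I would apply the operations $s_{i,j}$ with $1\le i<j\le n$ simultaneously to $\mathcal{A}\subseteq\binom{[n]}{k_1}$ and $\mathcal{B}\subseteq\binom{[n]}{k_2}$ until both are left-compressed; as recalled in Section~2, each $s_{i,j}$ preserves the cardinalities $|\mathcal{A}|,|\mathcal{B}|$ and the cross-intersecting property, so the reduced families are still cross-intersecting with the same sizes. The point to stress, however, is that being left-compressed is strictly weaker than being L-initial (for instance $\{12,13,23\}\subseteq\binom{[4]}{2}$ is left-compressed but not L-initial), so shifting alone does not finish the proof, and the remaining content is genuinely of Kruskal--Katona type.

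For the second stage I would reformulate cross-intersection as a shadow-avoidance condition. Writing $T(\mathcal{B})=\{A\in\binom{[n]}{k_1}:A\cap B\neq\emptyset\text{ for all }B\in\mathcal{B}\}$ for the family of $k_1$-element transversals of $\mathcal{B}$, the pair $\mathcal{A},\mathcal{B}$ is cross-intersecting if and only if $\mathcal{A}\subseteq T(\mathcal{B})$; moreover the complement of $T(\mathcal{B})$ inside $\binom{[n]}{k_1}$ is exactly the level-$k_1$ lower shadow of the complemented family $\{[n]\setminus B:B\in\mathcal{B}\}\subseteq\binom{[n]}{n-k_2}$ (here $n\geq k_1+k_2$ guarantees that this shadow lives at the correct level $k_1\le n-k_2$). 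The two facts I would then establish are: (F1) when $\mathcal{B}$ is L-initial, $T(\mathcal{B})$ is itself an L-initial family of $k_1$-sets; and (F2) $|T(\mathcal{B}_L)|\geq|T(\mathcal{B})|$, i.e.\ passing to the L-initial family does not decrease the number of transversals. Granting (F1)--(F2), the proof closes cleanly: from $\mathcal{A}\subseteq T(\mathcal{B})$ we get $|\mathcal{A}_L|=|\mathcal{A}|\leq|T(\mathcal{B})|\leq|T(\mathcal{B}_L)|$, and since $T(\mathcal{B}_L)$ is L-initial by (F1) while $\mathcal{A}_L$ is by definition the L-initial family of size $|\mathcal{A}|$, the nesting property of initial segments of $\prec_L$ forces $\mathcal{A}_L\subseteq T(\mathcal{B}_L)$, which is precisely the assertion that $\mathcal{A}_L$ and $\mathcal{B}_L$ are cross-intersecting.

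Statement (F2) is where the Kruskal--Katona theorem enters: since $\binom{n}{k_1}-|T(\mathcal{B})|$ equals the size of the level-$k_1$ lower shadow of $\{[n]\setminus B:B\in\mathcal{B}\}$, minimizing this shadow over all $\mathcal{B}$ of fixed cardinality is exactly a shadow-minimization problem, so I would invoke Kruskal--Katona to identify the extremal size and then verify that $\mathcal{B}=\mathcal{B}_L$ attains it. I expect this verification to be the main obstacle: complementation does not carry $\prec_L$ on $k_2$-sets to the colexicographic order on $(n-k_2)$-sets by any simple reversal, so the colex-extremality delivered by Kruskal--Katona cannot be translated back into the language of $\prec_L$ by a one-line duality and instead needs a careful, order-by-order bookkeeping argument (one should check that the L-initial $\mathcal{B}_L$ realizes the Kruskal--Katona minimum even though its complement need not be the canonical colex-initial family). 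A clean way to manage both (F1) and this orientation issue is to run an induction on $n$: splitting each family according to whether a set contains the element $1$ decomposes the cross-intersection of $\mathcal{A},\mathcal{B}$ into the cross-intersection of three pairs of families on the ground set $\{2,\dots,n\}$, to which the induction hypothesis applies after one shows that a set may always be moved into the $1$-star---filling it greedily, in accordance with the structure of $\mathcal{A}_L$ and $\mathcal{B}_L$---without destroying cross-intersection. Both routes isolate the same essential difficulty, namely the compatibility of $\prec_L$ with the exchange of elements, and I would aim to resolve it through this $1$-star concentration step.
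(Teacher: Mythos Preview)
The paper does not prove Theorem~\ref{keyleftlemma}; it is quoted from \cite{hilton1976erdos,ou2005maximum} and used as a black box, so there is no proof in the paper to compare against.

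On the merits of your plan: the Kruskal--Katona route is correct, but you are manufacturing a difficulty that is not there. The map $B\mapsto \sigma([n]\setminus B)$, where $\sigma(i)=n+1-i$, is an order-preserving bijection from $(\binom{[n]}{k_2},\prec_L)$ to $(\binom{[n]}{n-k_2},\prec_C)$: complementation keeps the symmetric difference but swaps membership, so $B\prec_L B'$ implies $[n]\setminus B'\prec_L [n]\setminus B$, and the reversal $\sigma$ then converts lex to colex. Hence $\{\sigma([n]\setminus B):B\in\mathcal{B}_L\}$ is exactly the colex-initial segment of $\binom{[n]}{n-k_2}$, and Kruskal--Katona (in the form ``colex-initial families have colex-initial shadows of minimum size'') gives (F1) and (F2) in one stroke. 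No ``order-by-order bookkeeping'' and no separate induction on $n$ is needed. Two smaller points: your Stage~1 is redundant, since the Kruskal--Katona argument nowhere uses that $\mathcal{A},\mathcal{B}$ are left-compressed; and the hypothesis $n\ge k_1+k_2$ you slip in is not part of the statement, so you should remark that for $n<k_1+k_2$ every $k_1$-set meets every $k_2$-set and the conclusion is trivial.
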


\section{Proof of Theorem \ref{maintheorem}}

We are now in a position to prove the main result. 
\begin{proof}[\bf{Proof of Theorem \ref{maintheorem}}]
    Let $\mathcal{F}_1\subseteq\binom{[n]}{R_1},\mathcal{F}_2\subseteq\binom{[n]}{R_2},\ldots,\mathcal{F}_m\subseteq\binom{[n]}{R_m}$ be non-empty cross-intersecting families with maximum $\sum_{j=1}^{m}|\mathcal{F}_j|$. By Theorem \ref{keyleftlemma}, without loss of generality, we can assume that $(\mathcal{F}_j)_r$ is L-initial for all $j\in [m]$ and $r\in R_j$.
    By the maximality of $\sum_{j=1}^{m}|\mathcal{F}_j|$, we may also assume that all $\mathcal{F}_j$ where $j\in[m]$ are monotone.   
    Since the family $\mathcal{F}_j$ remains unchanged under the shifting operations, it follows that all $\mathcal{F}_j$ for $j\in[m]$ are left-compressed. 

    Suppose that $\mathcal{F}_j$ has extent $l_j$ and generating family $\mathcal{G}_j$ for all $j\in[m]$. Let $l=\text{max}\{l_j:j\in [m]\}$ and the boundary generating family $\overline{\mathcal{G}_{j}}$ of $\mathcal{F}_j$ consisting of all generating sets of $\mathcal{F}_j$ containing $l$ for all $j\in[m]$. 

    If $|\mathcal{F}_j|<\sum_{a\in R_j}\binom{n-1}{a-1}$ for all $j\in [m]$, then we are done as $|\mathcal{S}(n,R_j)|=\sum_{a\in R_j}\binom{n-1}{a-1}$. 
    
    Assume that $|\mathcal{F}_\gamma|\geq\sum_{a\in R_\gamma}\binom{n-1}{a-1}$ for some $\gamma\in [m]$. Since $(\mathcal{F}_\gamma)_{r}$ is L-initial and has a size greater than or equal to $\binom{n-1}{r-1}$ for some $r\in R_{\gamma}$, we infer that $\langle{1}\rangle_{r}\subseteq (\mathcal{F}_\gamma)_{r}$. Using $n> k_1+k_2$, we know that each element in $\mathcal{F}_{\alpha}$ for all $\alpha\in[m]\setminus\{\gamma\}$ contains $1$. By the maximality of $\sum_{j=1}^{m}|\mathcal{F}_j|$, we deduce that $\{1\}\in \mathcal{G}_\gamma$. If $l=1$, then $\mathcal{F}_j=\mathcal{S}(n,R_j)$ for all $j\in[m]$, and we are done. Therefore, we only need to consider the case when $l>1$.

    In fact, if $l>1$, then $\overline{\mathcal{G}_{\gamma}}\neq\emptyset$. To demonstrate this, we need the following Lemma \ref{keylemma}, which is analogous to a result in \cite{zhang2025cross-t-intersecting}. For the convenience of the reader, we provide a proof.  
\begin{lemma}\label{keylemma}
    For $1< u\leq l$, if $(\overline{\mathcal{G}_i})_u\neq\emptyset$ for some $i\in[m]$ and $\sum_{i=1}^{m}|\mathcal{F}_i|$ is maximum, then there exists $(\overline{\mathcal{G}_j})_{l+1-u}\neq\emptyset$ for some $j\in [m]\setminus\{i\}$. In other words, for any $E\in \mathcal{G}_i, F\in \mathcal{G}_j$, if $E\bigcap F=\{l\}$, then $E\bigcup F=[l]$ and $|E|+|F|=l+1$. Furthermore, $|(\overline{\mathcal{G}_i})_u|=|(\overline{\mathcal{G}_j})_{l+1-u}|$.
\end{lemma}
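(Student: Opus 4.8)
The plan is to reduce Lemma~\ref{keylemma} to a single statement about generating sets: given a boundary generating set $E\in(\overline{\mathcal{G}_i})_u$, produce a generating set $F$ of some $\mathcal{F}_j$ with $j\neq i$ and $E\cap F=\{l\}$, and then show that any such pair is forced to satisfy $E\cup F=[l]$. Two facts are used throughout. First, since $l=\max\{l_j:j\in[m]\}$ is the global extent, every generating set of every $\mathcal{F}_\alpha$ is contained in $[l]$, so $l\in E$ already forces $\max(E)=l$. Second, Lemma~\ref{two-lemma}(i) applies because $n\geq k_1+k_2$, and Lemma~\ref{two-lemma}(ii) and Lemma~\ref{basiclemma} apply because the extremal $\mathcal{F}_\alpha$ are monotone and left-compressed (as arranged at the start of the proof of Theorem~\ref{maintheorem}).

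\textbf{Existence of an interlocking partner.} Fix $E\in(\overline{\mathcal{G}_i})_u$ and suppose, for contradiction, that for every $j\in[m]\setminus\{i\}$ and every $F\in\mathcal{G}_j$ we have $E\cap F\neq\{l\}$. Since $E\cap F\neq\emptyset$ by Lemma~\ref{two-lemma}(i), this means $F$ always meets $E\setminus\{l\}$: if $l\notin F$ then $F\cap E\subseteq E\setminus\{l\}$, and if $l\in F$ then $F\cap E$ contains a point other than $l$. By Lemma~\ref{basiclemma} every member of every $\mathcal{F}_j$, $j\neq i$, contains some such $F$ and hence meets the $(u-1)$-set $E\setminus\{l\}$ (here $u>1$ is exactly what keeps this set nonempty). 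Therefore $\mathcal{F}_i':=\mathcal{F}_i\cup\langle E\setminus\{l\}\rangle_{R_i}$ is still non-empty and remains cross-intersecting with every $\mathcal{F}_j$, $j\neq i$, since each new member contains $E\setminus\{l\}$, which every member of $\mathcal{F}_j$ meets. Because $E$ is a minimal generating set, $E\setminus\{l\}$ does not already generate a subfamily of $\mathcal{F}_i$, so $\mathcal{F}_i'\supsetneq\mathcal{F}_i$ and $\sum_j|\mathcal{F}_j|$ strictly increases, contradicting maximality. Hence there exist $j\neq i$ and $F\in\mathcal{G}_j$ with $E\cap F=\{l\}$.

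\textbf{Rigidity.} Let $E\in\mathcal{G}_i$ and $F\in\mathcal{G}_j$ with $i\neq j$ and $E\cap F=\{l\}$; then $E,F\subseteq[l]$ and $\max(E)=\max(F)=l$. If $E\cup F\neq[l]$, pick $p\in[l-1]\setminus(E\cup F)$. Since $1\leq p<l\leq l_i$ with $p\notin E$ while $l\in E$, Lemma~\ref{two-lemma}(ii) yields $E_0\in\mathcal{G}_i$ with $E_0\subseteq s_{p,l}(E)=(E\setminus\{l\})\cup\{p\}$. But $(E\setminus\{l\})\cap F=\emptyset$ and $p\notin F$, so $E_0\cap F=\emptyset$, contradicting Lemma~\ref{two-lemma}(i). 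Hence $E\cup F=[l]$, and $|E|+|F|=|E\cup F|+|E\cap F|=l+1$, so $|F|=l+1-u$; since $l\in F$ this gives $F\in(\overline{\mathcal{G}_j})_{l+1-u}\neq\emptyset$, which is the existence claim and the ``in other words'' assertion.

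\textbf{Cardinality, and the main obstacle.} The rigidity step shows that the partner of $E$ is the explicitly determined set $([l]\setminus E)\cup\{l\}$, and running the same step starting from an element $F$ of $(\overline{\mathcal{G}_j})_{l+1-u}$ recovers $E=([l]\setminus F)\cup\{l\}$; hence $E\mapsto([l]\setminus E)\cup\{l\}$ is a bijection between $(\overline{\mathcal{G}_i})_u$ and $(\overline{\mathcal{G}_j})_{l+1-u}$, giving the claimed equality of cardinalities. I expect the delicate points to be, first, the existence step — noticing that the failure of interlocking lets one replace the generator $E$ by $E\setminus\{l\}$ while preserving non-emptiness and the cross-intersecting property, thereby contradicting maximality — and, second, pinning down a single common family $\mathcal{F}_j$ that receives all the partners of the various $E\in(\overline{\mathcal{G}_i})_u$, which I would extract from the $L$-initial/left-compressed structure of the $\mathcal{F}_\alpha$ together with the rigidity step applied to pairs of partners.
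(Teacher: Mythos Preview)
Your existence and rigidity arguments follow the paper's approach almost verbatim. For existence, both you and the paper exploit maximality: if every generator of every $\mathcal{F}_j$, $j\neq i$, met $E\setminus\{l\}$, then enlarging $\mathcal{F}_i$ by $\langle E\setminus\{l\}\rangle_{R_i}$ (the paper phrases this as replacing $E$ by $E'=E\setminus\{l\}$ in $\mathcal{G}_i$) would preserve cross-intersection and strictly increase the sum. For rigidity, both shift $l$ down to a missing element of $[l-1]\setminus(E\cup F)$ and invoke Lemma~\ref{two-lemma}(i); you apply the shift to $E$, the paper to $F$, which is symmetric.

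On the final clause $|(\overline{\mathcal{G}_i})_u|=|(\overline{\mathcal{G}_j})_{l+1-u}|$: the paper's own proof actually stops at ``$F\in(\overline{\mathcal{G}_j})_{l+1-u}$'' and never argues the cardinality equality. Your involution $E\mapsto([l]\setminus E)\cup\{l\}$ is the natural candidate, and the obstacle you flag is real and not cosmetic: the existence step only places the partner of a given $E$ in \emph{some} $\mathcal{G}_{j(E)}$, with nothing forcing $j(E)$ to be independent of $E$; conversely, starting from $F\in(\overline{\mathcal{G}_j})_{l+1-u}$ the same argument lands the partner in some $\mathcal{G}_{i'}$ with $i'\neq j$, not necessarily $i'=i$. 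So neither your sketch nor the paper's proof singles out one $j$ for which the equality holds. This gap is harmless for the sequel, however: the applications in Lemmas~\ref{new-cross-inter1}--\ref{new-cross-inter2} and inequalities~(\ref{equ-1})--(\ref{equ-2}) invoke only the existence and the ``in other words'' rigidity statement, not the cardinality equality itself.
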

\begin{proof}[Proof of Lemma \ref{keylemma}]
   Notice that $(\overline{\mathcal{G}_i})_u\neq\emptyset$ means that there exists a generating set $E\in {\mathcal{G}_i}$ such that $|E|=u$ and $l\in E$. Consider $E'=E\setminus\{l\}\neq\emptyset$ as $u>1$. If for any $F_0\in {\mathcal{G}_j}$ where $j\in [m]\setminus\{i\}$ such that $F_0\bigcap E'\neq\emptyset$, then 
   \begin{equation*}
   (\mathcal{F}_1,\mathcal{F}_2,\ldots,\langle{\mathcal{G}_i\setminus\{E\}\bigcup \{E'\}}\rangle_{R_i},\ldots,\mathcal{F}_m)    
   \end{equation*}
   is a new cross-intersecting sequence, contradicting the maximality of $\sum_{i=1}^{m}|\mathcal{F}_i|$. Therefore, there exists an $F \in {\mathcal{G}_j}$ for some $j\in [m]\setminus\{i\}$ such that $E'\bigcap F=\emptyset$. By Lemma \ref{two-lemma}(i), we have $|E\bigcap F|=1$, which implies $l\in F$(i.e., $F\in \overline{{\mathcal{G}_j}}$) and $E\bigcap F=\{l\}$. 

   To complete the proof, we need to show that $E\bigcup F=[l]$. Clearly, $E\bigcup F\subseteq[l]$. On the contrary, if there exists an element $x\in[l]$ such that $x\notin E\bigcup F$, then by Lemma \ref{two-lemma}(ii), there exists an $F_1\in \mathcal{G}_j$ such that $F_1\subseteq s_{x,l}(F)$. This implies that $l\notin F_1\bigcap E\subseteq s_{x,l}(F)\bigcap E$. Since $x\notin E$ and $E\bigcap F=\{l\}$, we have $s_{x,l}(F)\bigcap E=\emptyset$. Thus $F_1\bigcap E=\emptyset$, contradicting Lemma \ref{two-lemma}(i). Therefore, $E\bigcup F=[l]$ and $F\in (\overline{\mathcal{G}_j})_{l+1-u}$. 
\end{proof}   

We claim that $(\overline{\mathcal{G}_{\gamma}})_u\neq\emptyset$ for some $1\leq u\leq l$. On the contrary, assume $\overline{\mathcal{G}_{\gamma}}=\emptyset$ and $\overline{\mathcal{G}_{\alpha}}\neq\emptyset$ for some $\alpha\in[m]\setminus\{\gamma\}$. Combining $\{1\}\in \mathcal{G}_{\gamma}$ with Lemma \ref{two-lemma}(i), we can select a generating set $E$ in $\overline{\mathcal{G}_{\alpha}}$ such that $1,l\in E$. By Lemma \ref{keylemma}, there exists an $F\in \overline{(\mathcal{G}_\beta)}$ for some  $\beta\in[m]\setminus\{\alpha,\gamma\}$ such that $E\bigcup F=[l]$ and $E\bigcap F=\{l\}$, which contradicts the fact that $1\in E\bigcap F$. Therefore, we conclude that $(\overline{\mathcal{G}_{\gamma}})_u\neq\emptyset$ for some $1\leq u\leq l$, and this can be divided into two cases according to the value of $u$. 

\textbf{Case 1.} $1<u<l$. We will construct two new cross-intersecting families and obtain a contradiction. Denote $(\overline{\mathcal{G}_{\gamma}})_{u}'=\{E\setminus\{l\}:E\in (\overline{\mathcal{G}_{\gamma}})_u\}$. By Lemma \ref{keylemma}, there exists $(\overline{\mathcal{G}_{\alpha}})_{l+1-u}\neq\emptyset$ for some $\alpha\in[m]\setminus\{\gamma\}$ and so $\mathscr{D}_{R_{\alpha}}((\overline{\mathcal{G}_{\alpha}})_{l+1-u})\neq\emptyset$. 
Let 
\begin{equation*}
\Tilde{\mathcal{F}_{\gamma}}=\mathcal{F}_{\gamma}\bigcup \mathscr{D}_{R_{\gamma}}((\overline{\mathcal{G}_{\gamma}})_{u}')\text{ and } 
\Tilde{\mathcal{F}_{\alpha}}=\mathcal{F}_{\alpha}\setminus \mathscr{D}_{R_{\alpha}}((\overline{\mathcal{G}_{\alpha}})_{l+1-u}) \text{ for all }\alpha\in[m]\setminus\{\gamma\}.
\end{equation*}

In the special case where $b<l+1-u$ for some $\alpha\in[m]\setminus\{\gamma\}$ and $b\in R_{\alpha}$, we adopt the convention that $\mathscr{D}_{b}((\overline{\mathcal{G}_{\alpha}})_{l+1-u})=\emptyset$. 

\begin{lemma}\label{new-cross-inter1}
$(\Tilde{\mathcal{F}_1},\Tilde{\mathcal{F}_2},\ldots,\Tilde{\mathcal{F}_{\gamma}},\ldots,\Tilde{\mathcal{F}_{m}})$ is a cross-intersecting sequence. Furthermore,
\begin{equation*}
\sum_{j=1}^{m}|\Tilde{\mathcal{F}_j}|=\sum_{j=1}^{m}|\mathcal{F}_j|+\sum_{a\in R_{\gamma}}|(\overline{\mathcal{G}_\gamma})_u|\binom{n-l}{a-u+1}-\sum_{\alpha\in[m]\setminus\{\gamma\}}
\sum_{b\in R_{\alpha}}|(\overline{\mathcal{G}_{\alpha}})_{l+1-u}|\binom{n-l}{b-(l+1-u)}.
\end{equation*}
\end{lemma}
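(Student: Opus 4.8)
The plan is to verify the two assertions separately: first that $(\tilde{\mathcal{F}_1},\dots,\tilde{\mathcal{F}_m})$ is cross-intersecting, and then the displayed size identity.

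For the cross-intersecting property I would first note that $\tilde{\mathcal{F}_\alpha}\subseteq\mathcal{F}_\alpha$ for every $\alpha\in[m]\setminus\{\gamma\}$, so every pair $(\tilde{\mathcal{F}_\alpha},\tilde{\mathcal{F}_\beta})$ with $\alpha,\beta\neq\gamma$ is cross-intersecting, and it remains only to show that each $A\in\tilde{\mathcal{F}_\gamma}$ meets each $B\in\tilde{\mathcal{F}_\beta}$ for $\beta\neq\gamma$. If $A\in\mathcal{F}_\gamma$ this is clear, so assume $A\in\mathscr{D}_{R_\gamma}((\overline{\mathcal{G}_\gamma})_u')$, say $E\setminus\{l\}\subseteq A$ with $E\in(\overline{\mathcal{G}_\gamma})_u$; note $E\setminus\{l\}\neq\emptyset$ since $u>1$. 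Suppose for contradiction that $A\bigcap B=\emptyset$. Then $B\bigcap(E\setminus\{l\})=\emptyset$; choosing $F\in\mathcal{G}_\beta$ with $F\subseteq B$, Lemma \ref{two-lemma}(i) forces $|E\bigcap F|\geq1$, and since $F\bigcap(E\setminus\{l\})=\emptyset$ this yields $l\in F$ and $E\bigcap F=\{l\}$. By Lemma \ref{keylemma}, $E\bigcup F=[l]$; hence $[l]$ is the disjoint union of $E\setminus\{l\}$ and $F$, and $|F|=l+1-u$, so $F\in(\overline{\mathcal{G}_\beta})_{l+1-u}$. Combining $F\subseteq B$, $B\bigcap(E\setminus\{l\})=\emptyset$ and $[l]=(E\setminus\{l\})\bigcup F$, we get $B\bigcap[\max(F)]=B\bigcap[l]=F$, i.e.\ $B\in\mathscr{D}_{R_\beta}(F)\subseteq\mathscr{D}_{R_\beta}((\overline{\mathcal{G}_\beta})_{l+1-u})$, contradicting $B\in\tilde{\mathcal{F}_\beta}$.

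For the size identity, the crux is the set equality, valid for each $E\in(\overline{\mathcal{G}_\gamma})_u$ with $E'=E\setminus\{l\}$,
\begin{equation*}
\mathscr{D}_{R_\gamma}(E')\setminus\mathcal{F}_\gamma=\{A\in\textstyle\binom{[n]}{R_\gamma}:A\bigcap[l]=E'\}.
\end{equation*}
For ``$\supseteq$'': if $A\bigcap[l]=E'$ and $A\in\mathcal{F}_\gamma=\langle\mathcal{G}_\gamma\rangle_{R_\gamma}$, take $E_1\in\mathcal{G}_\gamma$ with $E_1\subseteq A$; since $\max(E_1)\leq l_\gamma\leq l$ we get $E_1=E_1\bigcap[l]\subseteq A\bigcap[l]=E'\subsetneq E$, so $\langle E'\rangle_{R_\gamma}\subseteq\langle E_1\rangle_{R_\gamma}\subseteq\mathcal{F}_\gamma$, contradicting the minimality of the generating set $E$. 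For ``$\subseteq$'': take $A\in\mathscr{D}_{R_\gamma}(E')$ with $A\bigcap[l]\neq E'$ and pick $x\in(A\bigcap[l])\setminus E'$; from $A\bigcap[\max(E')]=E'$ it follows that $\max(E')<x\leq l$. If $x=l$, then $A\supseteq E'\bigcup\{l\}=E$, so $A\in\langle E\rangle_{R_\gamma}\subseteq\mathcal{F}_\gamma$. If $x<l$, then $A^{*}:=(A\setminus\{x\})\bigcup\{l\}\supseteq E$ lies in $\mathcal{F}_\gamma$, and since $A^{*}$ contains $l$ but not $x$ with $x<l$, left-compressedness of $\mathcal{F}_\gamma$ yields $A=(A^{*}\setminus\{l\})\bigcup\{x\}\in\mathcal{F}_\gamma$. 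Granting this equality: distinct $E\in(\overline{\mathcal{G}_\gamma})_u$ give distinct $E'$, hence pairwise disjoint sets $\{A:A\bigcap[l]=E'\}$, and a fixed $(u-1)$-subset $E'$ of $[l]$ extends to such an $A$ of size $a$ in exactly $\binom{n-l}{a-u+1}$ ways; thus $|\tilde{\mathcal{F}_\gamma}|=|\mathcal{F}_\gamma|+|(\overline{\mathcal{G}_\gamma})_u|\sum_{a\in R_\gamma}\binom{n-l}{a-u+1}$. On the other side, for $\alpha\neq\gamma$, Lemma \ref{basiclemma} shows $\mathscr{D}_{R_\alpha}((\overline{\mathcal{G}_\alpha})_{l+1-u})=\bigcup_{F\in(\overline{\mathcal{G}_\alpha})_{l+1-u}}\mathscr{D}_{R_\alpha}(F)$ is a disjoint subunion of $\mathcal{F}_\alpha$, each $\mathscr{D}_{R_\alpha}(F)$ having $\sum_{b\in R_\alpha}\binom{n-l}{b-(l+1-u)}$ members; hence $|\tilde{\mathcal{F}_\alpha}|=|\mathcal{F}_\alpha|-|(\overline{\mathcal{G}_\alpha})_{l+1-u}|\sum_{b\in R_\alpha}\binom{n-l}{b-(l+1-u)}$. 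Summing over $j\in[m]$ gives the asserted formula.

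The step I expect to be the main obstacle is the inclusion ``$\subseteq$'' in the displayed equality, i.e.\ showing that, apart from the frozen sets $\{A:A\bigcap[l]=E'\}$, every member of $\mathscr{D}_{R_\gamma}(E')$ already belongs to $\mathcal{F}_\gamma$; the decisive tool there is the single shifting operation $s_{x,l}$ that converts $A^{*}\in\mathcal{F}_\gamma$ back into $A$, which requires $x<l$ and thus relies on $l$ being at least the extent of $\mathcal{F}_\gamma$ (also used in ``$\supseteq$'' via $\max(E_1)\leq l_\gamma\leq l$). One should also keep track of the convention $\binom{n-l}{\,\cdot\,}=0$ for negative lower entries, so that the formula remains correct when some $R_\alpha$ has no element $\geq l+1-u$ (equivalently $\mathscr{D}_{R_\alpha}((\overline{\mathcal{G}_\alpha})_{l+1-u})=\emptyset$), and bear in mind that the hypothesis $u>1$ is exactly what makes $E\setminus\{l\}$ nonempty, which powers the cross-intersection argument.
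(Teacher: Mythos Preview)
Your argument is essentially the paper's: the cross-intersecting check reduces to the new part of $\tilde{\mathcal F}_\gamma$ versus each $\tilde{\mathcal F}_\alpha$, where Lemma~\ref{two-lemma}(i) together with Lemma~\ref{keylemma} disposes of the only dangerous case $E\cap F=\{l\}$; and for the size, both you and the paper identify the sets newly added to $\mathcal F_\gamma$ as precisely those $A$ with $A\cap[l]=E\setminus\{l\}$ (the paper via the split of $(E\setminus\{l\})\times\binom{\{l,\dots,n\}}{a-u+1}$ according to whether $l$ is chosen, you via the explicit set identity $\mathscr{D}_{R_\gamma}(E')\setminus\mathcal F_\gamma=\{A:A\cap[l]=E'\}$, which is arguably cleaner).

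There is one small wrinkle in your ``$\subseteq$'' step: if $l\in A$ and you happen to choose $x<l$, then $A^{*}=(A\setminus\{x\})\cup\{l\}=A\setminus\{x\}$ has size $|A|-1$ and need not lie in $\binom{[n]}{R_\gamma}$, so the assertion ``$A^{*}\in\mathcal F_\gamma$'' is not justified as written. The fix is immediate: if $l\in A$ then already $E=E'\cup\{l\}\subseteq A$ and hence $A\in\mathcal F_\gamma$; so you may assume $l\notin A$ before picking $x$, and then $x<l$ holds automatically and $|A^{*}|=|A|$.
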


\begin{proof}[Proof of Lemma \ref{new-cross-inter1}]
We first determine the cardinality of $\sum_{j=1}^{m}|\Tilde{\mathcal{F}_{j}}|$. By Lemma \ref{basiclemma}, we have $\Tilde{\mathcal{F}_{\gamma}}=\left({\dot{\bigcup}_{E\in{\mathcal{G}_{\gamma}}}}\mathscr{D}_{R_{\gamma}}(E)\right)\bigcup\mathscr{D}_{R_{\gamma}}((\overline{\mathcal{G}_{\gamma}})_{u}')$. Notice that 

\begin{align*}
 \mathscr{D}_{R_{\gamma}}((\overline{\mathcal{G}_{\gamma}})_{u}')&=  \bigcup_{\substack{a\in R_{\gamma},\\E\in (\overline{\mathcal{G}_{\gamma}})_{u}}}(E\setminus\{l\})\times\binom{\{l,l+1,\cdots,n\}}{a-u+1}   \\
 &=\bigcup_{\substack{a\in R_{\gamma},\\E\in (\overline{\mathcal{G}_{\gamma}})_{u}}}
 \left(E\times\binom{\{l+1,l+2,\cdots,n\}}{a-u}\bigcup(E\setminus\{l\})\times\binom{\{l+1,l+2,\cdots,n\}}{a-u+1}\right)\\
 &=\mathscr{D}_{R_{\gamma}}((\overline{\mathcal{G}_{\gamma}})_{u})\bigcup\left( \bigcup_{\substack{a\in R_{\gamma},\\E\in (\overline{\mathcal{G}_{\gamma}})_{u}}}(E\setminus\{l\})\times\binom{\{l+1,l+2,\cdots,n\}}{a-u+1}\right).
\end{align*}

For fixed $a\in R_{\gamma}$ and $E\in(\overline{\mathcal{G}_{\gamma}})_{u}$, the set $(E\setminus\{l\})\times\binom{\{l+1,l+2,\cdots,n\}}{a-u+1}$ cannot be in  $\mathscr{D}_{a}(E_0)$ for any $E_0\in \mathcal{G}_{\gamma}$, as 
$\mathcal{G}_{\gamma}$ contains minimal elements and $E_0\subsetneq E$. Thus $\Tilde{\mathcal{F}_{\gamma}}$ is a disjoint-union of $\mathcal{F}_{\gamma}$ and $\bigcup_{{a\in R_{\gamma},E\in (\overline{\mathcal{G}_{\gamma}})_{u}}}(E\setminus\{l\})\times\binom{\{l+1,l+2,\cdots,n\}}{a-u+1}$. It follows that   $|\Tilde{\mathcal{F}_{\gamma}}|=|\mathcal{F}_{\gamma}|+\sum_{a\in R_{\gamma}}|(\overline{\mathcal{G}_\gamma})_u|\binom{n-l}{a-u+1}$. Clearly, $\Tilde{\mathcal{F}_{\alpha}}=\left({\dot{\bigcup}_{E\in{\mathcal{G}_{\alpha}}}}\mathscr{D}_{R_{\alpha}}(E)\right)\setminus \mathscr{D}_{R_{\alpha}}((\overline{\mathcal{G}_{\alpha}})_{l+1-u})$ has the size $|\Tilde{\mathcal{F}_{\alpha}}|=|{\mathcal{F}_{\alpha}}|-
\sum_{b\in R_{\alpha}}|(\overline{\mathcal{G}_{\alpha}})_{l+1-u}|\binom{n-l}{b-(l+1-u)}$ for all $\alpha\in [m]\setminus\{\gamma\}$. 

Since $\Tilde{\mathcal{F}_{\alpha}},\Tilde{\mathcal{F}_{\beta}}$ for distinct $\alpha,\beta\in[m]\setminus\{\gamma\}$ are cross-intersecting, it suffices to show that $ \bigcup_{\substack{a\in R_{\gamma},E\in (\overline{\mathcal{G}_{\gamma}})_{u}}}(E\setminus\{l\})\times\binom{\{l+1,l+2,\cdots,n\}}{a-u+1}$ and $\mathcal{F}_{\alpha}\setminus \mathscr{D}_{R_{\alpha}}((\overline{\mathcal{G}_{\alpha}})_{l+1-u})=\bigcup_{F\in \mathcal{G}_{\alpha}\setminus{(\overline{\mathcal{G}_{\alpha}})_{l+1-u}}}\mathscr{D}_{R_{\alpha}}(F)$ 
are cross-intersecting. Choosing $E$ and $F$ as above implies $|E\bigcap F|\geq 1$ by Lemma \ref{two-lemma}(i). If $E\bigcap F\neq\{l\}$, then we are done. If $E\bigcap F=\{l\}$, $E\in (\overline{\mathcal{G}_{\gamma}})_{u}$ implies $F\in(\overline{\mathcal{G}_{\alpha}})_{l+1-u}$ by Lemma \ref{keylemma}, a contradiction. Therefore, the proof is completed. 
\end{proof}

Similarly, let 
\begin{equation*}
\hat{\mathcal{F}_{\gamma}}=\mathcal{F}_{\gamma}\setminus \mathscr{D}_{R_{\gamma}}((\overline{\mathcal{G}_{\gamma}})_{u})
\text{ and }
\hat{\mathcal{F}_{\alpha}}=\mathcal{F}_{\alpha}\bigcup\mathscr{D}_{R_{\alpha}}((\overline{\mathcal{G}_{\alpha}})_{l+1-u}') \text{ for all }\alpha\in[m]\setminus\{\gamma\}. 
\end{equation*}

\begin{lemma}\label{new-cross-inter2}
$(\hat{\mathcal{F}_1},\hat{\mathcal{F}_2},\ldots,\hat{\mathcal{F}_{\gamma}},\ldots,\hat{\mathcal{F}_{m}})$ is a cross-intersecting sequence. Furthermore,
\begin{equation*}
\sum_{j=1}^{m}|\hat{\mathcal{F}_j}|=\sum_{j=1}^{m}|\mathcal{F}_j|+\sum_{\alpha\in[m]\setminus \{\gamma\}}\sum_{b\in R_{\alpha}}|(\overline{\mathcal{G}_{\alpha}})_{l+1-u}|\binom{n-l}{b-(l+1-u)+1}-\sum_{a\in R_{\gamma}}|(\overline{\mathcal{G}_\gamma})_u|\binom{n-l}{a-u}.    
\end{equation*}
\end{lemma}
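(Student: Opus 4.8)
The plan is to mirror the proof of Lemma \ref{new-cross-inter1}, with the roles of $\mathcal{F}_{\gamma}$ and of the families $\mathcal{F}_{\alpha}$, $\alpha\in[m]\setminus\{\gamma\}$, interchanged: this time $\mathcal{F}_{\gamma}$ is shrunk and each $\mathcal{F}_{\alpha}$ is enlarged. First I would establish the cardinality identity. Writing $(\overline{\mathcal{G}_{\alpha}})_{l+1-u}'=\{F\setminus\{l\}:F\in(\overline{\mathcal{G}_{\alpha}})_{l+1-u}\}$, Lemma \ref{basiclemma} gives $\hat{\mathcal{F}_{\gamma}}=\bigcup_{E\in\mathcal{G}_{\gamma}\setminus(\overline{\mathcal{G}_{\gamma}})_{u}}\mathscr{D}_{R_{\gamma}}(E)$ as a disjoint union, so $|\hat{\mathcal{F}_{\gamma}}|=|\mathcal{F}_{\gamma}|-\sum_{a\in R_{\gamma}}|(\overline{\mathcal{G}_{\gamma}})_{u}|\binom{n-l}{a-u}$. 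For each $\alpha\neq\gamma$ I would expand $\mathscr{D}_{R_{\alpha}}((\overline{\mathcal{G}_{\alpha}})_{l+1-u}')$ exactly as in the displayed computation in the proof of Lemma \ref{new-cross-inter1} (splitting off the coordinate $l$ from the free coordinates $\{l+1,\dots,n\}$), obtaining that $\hat{\mathcal{F}_{\alpha}}$ is the disjoint union of $\mathcal{F}_{\alpha}$ with $\bigcup_{b\in R_{\alpha},\,F\in(\overline{\mathcal{G}_{\alpha}})_{l+1-u}}(F\setminus\{l\})\times\binom{\{l+1,\dots,n\}}{b-(l+1-u)+1}$; minimality of the members of $\mathcal{G}_{\alpha}$ makes the new part disjoint from $\mathcal{F}_{\alpha}$. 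Hence $|\hat{\mathcal{F}_{\alpha}}|=|\mathcal{F}_{\alpha}|+\sum_{b\in R_{\alpha}}|(\overline{\mathcal{G}_{\alpha}})_{l+1-u}|\binom{n-l}{b-(l+1-u)+1}$, and summing over $j\in[m]$ yields the asserted formula.

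The substantive part is that $(\hat{\mathcal{F}_1},\dots,\hat{\mathcal{F}_m})$ is still cross-intersecting, and there are two kinds of pairs to treat. For a pair $(\hat{\mathcal{F}_{\alpha}},\hat{\mathcal{F}_{\beta}})$ with distinct $\alpha,\beta\in[m]\setminus\{\gamma\}$, I would use the earlier observation that, since $n>k_1+k_2$, every set of $\mathcal{F}_{\alpha}$ contains $1$; hence every generating set in $\mathcal{G}_{\alpha}$ contains $1$, in particular every $F\in\overline{\mathcal{G}_{\alpha}}$ does, and since $l>1$ so does $F\setminus\{l\}$. Thus every newly added set of $\hat{\mathcal{F}_{\alpha}}$ contains $1$, and likewise for $\hat{\mathcal{F}_{\beta}}$, so every member of $\hat{\mathcal{F}_{\alpha}}$ meets every member of $\hat{\mathcal{F}_{\beta}}$ at the element $1$. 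For a pair $(\hat{\mathcal{F}_{\gamma}},\hat{\mathcal{F}_{\alpha}})$, since $\hat{\mathcal{F}_{\gamma}}\subseteq\mathcal{F}_{\gamma}$ it already cross-intersects $\mathcal{F}_{\alpha}$, so it suffices to check that each $A\in\hat{\mathcal{F}_{\gamma}}$ meets each new set $X$ of $\hat{\mathcal{F}_{\alpha}}$, where $X\supseteq F\setminus\{l\}$ for some $F\in(\overline{\mathcal{G}_{\alpha}})_{l+1-u}$. By Lemma \ref{basiclemma} there is a unique $E\in\mathcal{G}_{\gamma}\setminus(\overline{\mathcal{G}_{\gamma}})_{u}$ with $A\in\mathscr{D}_{R_{\gamma}}(E)$, and then $E\subseteq A$. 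By Lemma \ref{two-lemma}(i), $E\cap F\neq\emptyset$; if $E\cap F$ contains a point other than $l$, that point lies in $F\setminus\{l\}\subseteq X$ and in $E\subseteq A$, and we are done. If $E\cap F=\{l\}$, then Lemma \ref{keylemma} forces $E\cup F=[l]$ and $|E|+|F|=l+1$, hence $|E|=u$ and, as $l\in E$, $E\in(\overline{\mathcal{G}_{\gamma}})_{u}$; since $A$ was not removed, $A\cap[l]\supsetneq E$, and any element of $(A\cap[l])\setminus E$ lies in $[l]\setminus E=F\setminus\{l\}\subseteq X$, so again $A\cap X\neq\emptyset$.

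I expect the main obstacle to be exactly this cross-intersecting check, and in particular the pair $(\hat{\mathcal{F}_{\alpha}},\hat{\mathcal{F}_{\beta}})$ of two families enlarged simultaneously — a configuration that does not occur in Lemma \ref{new-cross-inter1}, where only one family grows. What makes it go through is that the hypothesis $n>k_1+k_2$ anchors all the relevant sets — the old members of the $\mathcal{F}_{\alpha}$ and the newly added sets alike — at the common point $1$; without this, two enlarged families could a priori acquire disjoint members. A secondary subtlety, handled above, is that deleting $\mathscr{D}_{R_{\gamma}}((\overline{\mathcal{G}_{\gamma}})_{u})$ from $\mathcal{F}_{\gamma}$ does not remove every set that merely contains some $E\in(\overline{\mathcal{G}_{\gamma}})_{u}$; one has to note that a surviving such set must carry an extra element of $[l]$, which by Lemma \ref{keylemma} is forced into the complementary block $F\setminus\{l\}$ sitting inside the new set of $\hat{\mathcal{F}_{\alpha}}$.
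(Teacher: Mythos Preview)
Your approach is correct and is essentially the same as the paper's: the cardinality computation mirrors Lemma~\ref{new-cross-inter1} exactly, and for the pair $(\hat{\mathcal{F}_{\alpha}},\hat{\mathcal{F}_{\beta}})$ the paper, like you, observes that every set involved contains $1$ (the paper phrases this via $\{1\}\in\mathcal{G}_{\gamma}$ together with Lemma~\ref{two-lemma}(i), rather than the earlier ``every member of $\mathcal{F}_{\alpha}$ contains $1$'' remark, but the content is identical).

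There is one slip in your treatment of the pair $(\hat{\mathcal{F}_{\gamma}},\hat{\mathcal{F}_{\alpha}})$ in the subcase $E\cap F=\{l\}$. You correctly deduce $E\in(\overline{\mathcal{G}_{\gamma}})_{u}$, and this already contradicts your choice $E\in\mathcal{G}_{\gamma}\setminus(\overline{\mathcal{G}_{\gamma}})_{u}$; that finishes the argument, exactly as in the mirror step of Lemma~\ref{new-cross-inter1}. The continuation ``since $A$ was not removed, $A\cap[l]\supsetneq E$'' is false: because $l\in E$ you have $\max(E)=l$, and $A\in\mathscr{D}_{R_{\gamma}}(E)$ then means $A\cap[l]=E$ exactly, so there is no extra element of $[l]$ to find. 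Simply stop at the contradiction and delete the last clause.
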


\begin{proof}[Proof of Lemma \ref{new-cross-inter2}]
   Using a similar argument as in the proof of Lemma \ref{new-cross-inter1}, we can calculate the value of $\sum_{j=1}^{m}|\hat{\mathcal{F}_j}|$ and show that $\hat{\mathcal{F}_{\gamma}}$ and $\hat{\mathcal{F}_{\alpha}}$ for each $\alpha\in[m]\setminus\{\gamma\}$ are cross-intersecting. Since $l+1-u\geq 2$ and $\{1\}\in \mathcal{G}_{\gamma}$, each element in $\overline{\mathcal{G}_{\alpha}}$ for any $\alpha\in[m]\setminus\{\gamma\}$ contains $1$. Thus, $\hat{\mathcal{F}_{\alpha}}$ and  $\hat{\mathcal{F}_{\beta}}$ are cross-intersecting for distinct $\alpha,\beta\in[m]\setminus\{\gamma\}$. 
\end{proof}

Since $\sum_{j=1}^{m}|\mathcal{F}_j|$ is maximum, it follows from $\sum_{j=1}^{m}|\mathcal{F}_j|\geq \sum_{j=1}^{m}|\Tilde{\mathcal{F}_j}|$ that 
    \begin{equation}{\label{equ-1}}
    \sum_{\alpha\in[m]\setminus \{\gamma\}}\sum_{b\in R_{\alpha}}|(\overline{\mathcal{G}_{\alpha}})_{l+1-u}|\binom{n-l}{b-(l+1-u)}\geq \sum_{a\in R_{\gamma}}|(\overline{\mathcal{G}_\gamma})_u|\binom{n-l}{a-u+1}.
    \end{equation}
    Similarly, it follows from $\sum_{j=1}^{m}|\mathcal{F}_j|\geq \sum_{j=1}^{m}|\hat{\mathcal{F}_j}|$ that
    \begin{equation}{\label{equ-2}} 
    \sum_{a\in R_{\gamma}}|(\overline{\mathcal{G}_\gamma})_u|\binom{n-l}{a-u}\geq\sum_{\alpha\in[m]\setminus \{\gamma\}}\sum_{b\in R_{\alpha}}|(\overline{\mathcal{G}_{\alpha}})_{l+1-u}|\binom{n-l}{b-(l+1-u)+1}.
    \end{equation}

Multiplying (\ref{equ-1}) and (\ref{equ-2}), we obtain
\begin{align}\label{equ-5}
    &\quad\sum_{\alpha\in[m]\setminus \{\gamma\}}\sum_{b\in R_{\alpha}}\sum_{a\in R_{\gamma}}|(\overline{\mathcal{G}_\gamma})_u|\cdot|(\overline{\mathcal{G}_{\alpha}})_{l+1-u}|\binom{n-l}{b-(l+1-u)}\binom{n-l}{a-u} \notag\\
    &\geq\sum_{\alpha\in[m]\setminus \{\gamma\}}\sum_{b\in R_{\alpha}}\sum_{a\in R_{\gamma}}|(\overline{\mathcal{G}_\gamma})_u|\cdot|(\overline{\mathcal{G}_{\alpha}})_{l+1-u}|\binom{n-l}{b-(l+1-u)+1}\binom{n-l}{a-u+1}.
\end{align}

Using the log-concave property of the binomial coefficients, if $n> k_1+k_2\geq a+b$, we have $\binom{n-l}{b-(l+1-u)}\binom{n-l}{a-u}<\binom{n-l}{b-(l+1-u)+1}\binom{n-l}{a-u+1}$ 
; so we have reached a contradiction. 

Note that if $n=k_1+k_2$, equality in $(\ref{equ-5})$ implies that either $R_{\gamma}=\{k_1\}$ and $R_{\alpha}=\{k_2\}$ for all $\alpha\in[m]\setminus\{\gamma\}$, or $R_{\gamma}=R_{\alpha}=\{k\}$ for all $\alpha\in[m]\setminus\{\gamma\}$. Combining (\ref{equ-1}) and (\ref{equ-2}) with Lemma \ref{keylemma}, we deduce that $m=2$. Thus, $\mathcal{F}_{\gamma}=\binom{[n]}{k_1}\setminus{\overline{\mathcal{F}_{\alpha}}}$ and $\mathcal{F}_{\alpha}\subseteq\binom{[n]}{k_2}$ with $0<|\mathcal{F}_{\alpha}|<\binom{n}{k_2}$.

\textbf{Case 2.} $u=1$ or $l$. We will show that $u=l$ cannot occur. On the contrary, assume that $u=l$(i.e., $[l]\in\mathcal{G}_{\gamma}$). Since $\{1\}\in \mathcal{G}_{\gamma}$ and $\{1\}\subsetneq [l]$, this contradicts the minimality of the elements in $\mathcal{G}_{\gamma}$. Hence, we must have $u=1$(i.e., $\{l\}\in\mathcal{G}_{\gamma}$).

Using the fact that $\mathcal{G}_{\gamma}$ contains two special generating sets, $\{1\}$ and $\{l\}$, we will deduce the value of $|\mathcal{F}_{\alpha}|$ for all $\alpha\in[m]\setminus\{\gamma\}$.
Take $E\in \mathcal{G}_{\alpha}$ for any $\alpha\in [m]\setminus\{\gamma\}$. By Lemma \ref{two-lemma}(i), we have $\{1,l\}\subseteq E$. If there exists an $x\in[l]$ such that $x\notin E$, then applying the shifting operator $s_{x,l}$ and using Lemma \ref{two-lemma}(ii), we can find $E_0\subseteq s_{x,l}(E)$ for some $E_0\in \mathcal{G}_{\alpha}$, which contradicts $l\in E_0$. 
Thus, we conclude that $\mathcal{G}_{\alpha}=\{[l]\}$ and $|\mathcal{F}_{\alpha}|=\sum_{b\in R_{\alpha}}\binom{n-l}{b-l}$ for all $\alpha\in[m]\setminus\{\gamma\}$. Since all $\mathcal{F}_{\alpha}$ for $\alpha\in[m]\setminus\{\gamma\}$ are monotone and $\mathcal{G}_{\alpha}$ contains minimal elements, we have $l\leq k_{min}^{\gamma}$. Therefore, $\mathcal{F}_{\alpha}$ is $\mathcal{M}_{2}(n,R_{\alpha},[l])$ for all $\alpha\in[m]\setminus\{\gamma\}$ with $1\leq l\leq k_{min}^{\gamma}$.

Next, we will determine $\mathcal{F}_{\gamma}$. Claim that the maximality of $|\mathcal{F}_{\gamma}|$ implies $\mathcal{F}_{\gamma}=\mathcal{M}_{1}(n,R_{\gamma},[l])$. 
By Lemma \ref{basiclemma}, it suffices to show that $\mathcal{G}_{\gamma}=\binom{[l]}{1}$. On the contrary, assume that there exists a generating set $E\in\mathcal{G}_{\gamma}$ containing distinct $x,y\in [n]\setminus\{1,l\}$ such that $\{x\},\{y\}\notin \mathcal{G}_{\gamma}$ as $\mathcal{G}_{\gamma}$ contains minimal elements and $\{1\},\{l\}\in \mathcal{G}_{\gamma}$. Consider the new family $\langle{(\mathcal{G}_{\gamma}\setminus{E})\bigcup\{x\}\bigcup\{y\}}\rangle_{R_{\gamma}}$. Since the intersection of any generating set of $\langle{(\mathcal{G}_{\gamma}\setminus{E})\bigcup\{x\}\bigcup\{y\}}\rangle_{R_{\gamma}}$ and $[l]$ is non-empty, applying Lemma \ref{two-lemma}(i) and noting that $|\langle{(\mathcal{G}_{\gamma}\setminus{E})\bigcup\{x\}\bigcup\{y\}}\rangle_{R_{\gamma}}|>|{\mathcal{F}_{\gamma}}|$, we obtain a contradiction. 

Let $F_{\gamma}(l)=|\mathcal{M}_{1}(n,R_{\gamma},[l])|+\sum_{\alpha\in [m]\setminus\{\gamma\}}|\mathcal{M}_{2}(n,R_j,[l])|$ where $1\leq l\leq k_{min}^{\gamma}$. Next we will compute the maximum of $F_{\gamma}(l)$.

\begin{lemma}{\label{maximal-f}}
     For fixed $\gamma\in[m]$, if $n>k_1+k_2$, then $F_{\gamma}(l)\leq \text{max}\{F_{\gamma}(1),F_{\gamma}(k_{min}^{\gamma})\}$. Further, equality holds if and only if, one of the following holds: 
     \begin{itemize}
         \item[(i)] if $F_{\gamma}(1)\geq F_{\gamma}(k_{min}^{\gamma})$, then $\mathcal{F}_j=\mathcal{S}(n,R_j)$ for all $j\in [m]$; 
         \item[(ii)] if $F_{\gamma}(1)\leq F_{\gamma}(k_{min}^{\gamma})$, then $\mathcal{F}_{\gamma}=\mathcal{M}_{1}(n,R_{\gamma},[k_{min}^{\gamma}])$ and $\mathcal{F}_{\alpha}=\mathcal{M}_{2}(n,R_{\alpha},[k_{min}^{\gamma}])$ for all $\alpha\in[m]\setminus\{\gamma\}$.
     \end{itemize}
\end{lemma}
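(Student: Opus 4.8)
\textbf{Proof proposal for Lemma \ref{maximal-f}.}

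The plan is to analyze $F_{\gamma}(l)$ as a function of the integer variable $l$ ranging over $1\le l\le k_{min}^{\gamma}$ and show it is \emph{convex} in a suitable discrete sense, so that its maximum on the interval is attained at one of the two endpoints $l=1$ or $l=k_{min}^{\gamma}$. First I would write out $F_{\gamma}(l)$ explicitly using the definitions of $\mathcal{M}_1$ and $\mathcal{M}_2$: we have $|\mathcal{M}_1(n,R_{\gamma},[l])|=\sum_{a\in R_{\gamma}}\left(\binom{n}{a}-\binom{n-l}{a}\right)$ and $|\mathcal{M}_2(n,R_{\alpha},[l])|=\sum_{b\in R_{\alpha}}\binom{n-l}{b-l}$, so that
\begin{equation*}
F_{\gamma}(l)=\sum_{a\in R_{\gamma}}\binom{n}{a}-\sum_{a\in R_{\gamma}}\binom{n-l}{a}+\sum_{\alpha\in[m]\setminus\{\gamma\}}\sum_{b\in R_{\alpha}}\binom{n-l}{b-l}.
\end{equation*}
The first sum is a constant independent of $l$. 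Then I would study the forward difference $\Delta F_{\gamma}(l)=F_{\gamma}(l+1)-F_{\gamma}(l)$, using the Pascal-type identities $\binom{n-l}{a}-\binom{n-l-1}{a}=\binom{n-l-1}{a-1}$ and $\binom{n-l-1}{b-l-1}-\binom{n-l}{b-l}=-\binom{n-l-1}{b-l}$, to get a clean expression
\begin{equation*}
\Delta F_{\gamma}(l)=\sum_{a\in R_{\gamma}}\binom{n-l-1}{a-1}-\sum_{\alpha\in[m]\setminus\{\gamma\}}\sum_{b\in R_{\alpha}}\binom{n-l-1}{b-l-1}.
\end{equation*}

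The heart of the argument is then to show that $\Delta F_{\gamma}(l)$ changes sign at most once as $l$ increases, and moreover that it goes from being potentially positive to eventually negative — equivalently, $\Delta F_{\gamma}(l+1)\le 0$ whenever $\Delta F_{\gamma}(l)\le 0$. For this I would compare the two terms of $\Delta F_{\gamma}(l)$ ratio-wise: as $l$ grows, each binomial $\binom{n-l-1}{a-1}$ in the positive part shrinks (since the top decreases while the bottom $a-1$ is fixed), whereas each $\binom{n-l-1}{b-l-1}$ in the negative part behaves differently because \emph{both} the top and the bottom move; the key inequality is that $\binom{n-l-2}{b-l-2}/\binom{n-l-1}{b-l-1}=(b-l-1)/(n-l-1)$ versus $\binom{n-l-2}{a-1}/\binom{n-l-1}{a-1}=(n-l-a-1)/(n-l-1)$, and the hypothesis $n>k_1+k_2\ge a+b$ (with $a\le k_1$ and $b$ ranging over values $\le k_2$ when $\gamma$ realizes the max, or the symmetric bound) forces the positive part to decay at least as fast as the negative part, which is precisely what unimodality of $F_{\gamma}$ requires. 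I expect this monotonicity-of-the-difference step to be the main obstacle, since one must carefully track which of $k_1,k_2$ bounds $a$ versus $b$ depending on whether $\gamma$ is the index realizing $k_1$, and handle the degenerate terms where $b<l$ (for which $\mathscr{D}_b(\cdot)=\emptyset$ by the stated convention and the corresponding binomial vanishes).

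Once unimodality (convexity) of $F_{\gamma}$ on $\{1,\dots,k_{min}^{\gamma}\}$ is established, the bound $F_{\gamma}(l)\le\max\{F_{\gamma}(1),F_{\gamma}(k_{min}^{\gamma})\}$ is immediate, and I would finish by reading off the equality cases. If the maximum is at $l=1$, then $F_{\gamma}(1)=\sum_{j=1}^m|\mathcal{S}(n,R_j)|$ (since $\mathcal{M}_1(n,R_{\gamma},[1])=\mathcal{S}(n,R_{\gamma})$ and $\mathcal{M}_2(n,R_{\alpha},[1])=\mathcal{S}(n,R_{\alpha})$), and tracing back through the structural analysis already carried out in Case 2 — where $\mathcal{G}_{\gamma}=\binom{[l]}{1}$ and $\mathcal{G}_{\alpha}=\{[l]\}$ — forces $\mathcal{F}_j=\mathcal{S}(n,R_j)$ for all $j$, giving (i); if the maximum is at $l=k_{min}^{\gamma}$, the same analysis gives $\mathcal{F}_{\gamma}=\mathcal{M}_1(n,R_{\gamma},[k_{min}^{\gamma}])$ and $\mathcal{F}_{\alpha}=\mathcal{M}_2(n,R_{\alpha},[k_{min}^{\gamma}])$, which is (ii). The strict log-concavity of binomial coefficients under the strict hypothesis $n>k_1+k_2$ is what guarantees that no interior $l$ can tie the endpoint value, so these are the only equality configurations.
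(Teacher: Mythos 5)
Your overall strategy --- show that $F_{\gamma}$ has no interior local maximum on $\{1,\dots,k_{min}^{\gamma}\}$ by comparing consecutive differences via log-concavity of binomial coefficients --- is essentially the paper's: the paper assumes an interior maximizer $l$, extracts the two inequalities $F_{\gamma}(l)\ge F_{\gamma}(l+1)$ and $F_{\gamma}(l)\ge F_{\gamma}(l-1)$, multiplies them, and contradicts $\binom{n-l-1}{b-l}\binom{n-l}{a-1}<\binom{n-l-1}{a-1}\binom{n-l}{b-l+1}$, valid for $n>k_1+k_2\ge a+b$. However, the monotonicity claim you actually state is in the wrong direction. You announce convexity (maximum at an endpoint), but then assert that $\Delta F_{\gamma}$ ``goes from being potentially positive to eventually negative,'' i.e.\ $\Delta F_{\gamma}(l)\le 0\Rightarrow\Delta F_{\gamma}(l+1)\le 0$, and call this unimodality. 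A function whose forward difference changes sign from $+$ to $-$ has an \emph{interior} peak; that is the opposite of what is needed and would not yield $F_{\gamma}(l)\le\max\{F_{\gamma}(1),F_{\gamma}(k_{min}^{\gamma})\}$. The correct (and true) statement is the reverse implication $\Delta F_{\gamma}(l)\ge 0\Rightarrow\Delta F_{\gamma}(l+1)>0$: the sign change, if any, is from $-$ to $+$, so no interior point is a local maximum. Your own ratio computation, carried out correctly, gives exactly this. Writing $\Delta F_{\gamma}(l)=P(l)-N(l)$ with $P(l)=\sum_{a\in R_{\gamma}}\binom{n-l-1}{a-1}$ and $N(l)=\sum_{\alpha\ne\gamma}\sum_{b\in R_{\alpha}}\binom{n-l-1}{b-l}$ (note that the second term of your displayed $\Delta F_{\gamma}(l)$ should be $\binom{n-l-1}{b-l}$, not $\binom{n-l-1}{b-l-1}$ --- you applied your own Pascal identity incorrectly), one finds $\binom{n-l-2}{a-1}/\binom{n-l-1}{a-1}=(n-l-a)/(n-l-1)$ and $\binom{n-l-2}{b-l-1}/\binom{n-l-1}{b-l}=(b-l)/(n-l-1)$; the hypothesis $n>a+b$ forces the \emph{negative} part $N$ to decay strictly faster than the positive part $P$, term by term, so $P/N$ is strictly increasing. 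That is convexity, not unimodality.

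With the direction corrected the argument closes: since every term of $P$ decays by a strictly larger factor than every term of $N$, $P(l-1)\ge N(l-1)$ forces $P(l)>N(l)$, so no interior $l$ can satisfy both $F_{\gamma}(l)\ge F_{\gamma}(l-1)$ and $F_{\gamma}(l)\ge F_{\gamma}(l+1)$; this is literally the paper's ``multiply (\ref{equ-3}) and (\ref{equ-4})'' step in ratio form. Your identification $F_{\gamma}(1)=\sum_{j}|\mathcal{S}(n,R_j)|$, your handling of the equality cases by tracing back through the structure $\mathcal{G}_{\gamma}=\binom{[l]}{1}$, $\mathcal{G}_{\alpha}=\{[l]\}$, and your caution about which of $k_1,k_2$ bounds $a$ versus $b$ (either way $a+b\le k_1+k_2<n$ for every pair $a\in R_{\gamma}$, $b\in R_{\alpha}$) are all fine. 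So the gap is not in the method but in the stated direction of the key inequality: as written, the middle of your argument establishes the wrong conclusion.
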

\begin{proof}[Proof of Lemma \ref{maximal-f}]
    Let $l\in [1,k_{min}^{\gamma}]$ such that $F_{\gamma}(l)$ is maximum. If $l=1$ or $k_{min}^{\gamma}$, we are done. On the contrary, we assume that $1<l<k_{min}^{\gamma}$, then $F_{\gamma}(l)-F_{\gamma}(l+1)\geq0$ and $F_{\gamma}(l)-F_{\gamma}(l-1)\geq 0$.
Those imply that
\begin{equation}\label{equ-3}
\sum_{\alpha\in [m]\setminus \{\gamma\}}\sum_{b\in R_{\alpha}}\binom{n-l-1}{b-l}\geq \sum_{a\in R_{\gamma}}\binom{n-l-1}{a-1},
\end{equation}
and 
\begin{equation}\label{equ-4}
\sum_{a\in R_{\gamma}}\binom{n-l}{a-1}\geq\sum_{\alpha\in[m]\setminus \{\gamma\}}\sum_{b\in R_{\alpha}}\binom{n-l}{b-l+1}.
\end{equation}

Multiplying (\ref{equ-3}) and (\ref{equ-4}), we obtain
\begin{equation}\label{equ-6}
    \sum_{\substack{\alpha\in [m]\setminus \{\gamma\},\\b\in R_{\alpha},\\a\in R_{\gamma}}}\binom{n-l-1}{b-l}\binom{n-l}{a-1}\geq \sum_{\substack{\alpha\in [m]\setminus \{\gamma\},\\b\in R_{\alpha},\\a\in R_{\gamma}}}\binom{n-l-1}{a-1}\binom{n-l}{b-l+1}.
\end{equation}
However, if $n>k_1+k_2\geq a+b$, we have $\binom{n-l-1}{b-l}\binom{n-l}{a-1}<\binom{n-l-1}{a-1}\binom{n-l}{b-l+1}$; so we have reached a contradiction. The second part is trivial and we complete the proof. 
\end{proof}

Note that if $n=k_1+k_2$, equality in $(\ref{equ-6})$ implies that either $R_{\gamma}=\{k_1\}$ and $R_{\alpha}=\{k_2\}$ for all $\alpha\in[m]\setminus\{\gamma\}$, or $R_{\gamma}=R_{\alpha}=\{k\}$ for all $\alpha\in[m]\setminus\{\gamma\}$. If $m=2$, we are done. If $m>2$, inequality $(\ref{equ-4})$ doesn't hold. Thus, $\mathcal{F}_j=\mathcal{S}(n,R_j)$ for all $j\in[m]$. In particular, as we assume that $\mathcal{F}_j$ is L-initial, we conclude that $\mathcal{F}_j=\mathcal{F}$ for all $j\in[m]$ if $R_{j}=\{k\}$ for all $j\in[m]$, where $\mathcal{F}$ is an intersecting family with $|\mathcal{F}|=\binom{n-1}{k-1}$.

In conclusion, combining $\gamma\in[m]$ with Lemma \ref{maximal-f} yields the following result.
$$
\sum_{j=1}^{m}|\mathcal{F}_j|\leq max\left\{\sum_{j=1}^{m}|\mathcal{S}(n,R_j)|
,|\mathcal{M}_{1}(n,R_{\gamma},[k_{min}^{\gamma}])|+\sum_{\alpha\in[m]\setminus\{\gamma\}}|\mathcal{M}_{2}(n,R_{\alpha},[k_{min}^{\gamma}])|:\gamma\in[m]
\right\}
$$
and equality holds if and only if, one of the following holds:
\begin{itemize}
    \item[(i)] if $\sum_{j=1}^{m}|\mathcal{S}(n,R_j)|\geq  |\mathcal{M}_{1}(n,R_{\gamma},[k_{min}^{\gamma}])|+\sum_{\alpha\in[m]\setminus\{\gamma\}}|\mathcal{M}_{2}(n,R_{\alpha},[k_{min}^{\gamma}])|$ for all $\gamma\in[m]$, then $\mathcal{F}_j=\mathcal{S}(n,R_j)$ for all $j\in [m]$; 
    \item[(ii)] if $\sum_{j=1}^{m}|\mathcal{S}(n,R_j)|\leq  |\mathcal{M}_{1}(n,R_{\gamma},[k_{min}^{\gamma}])|+\sum_{\alpha\in[m]\setminus\{\gamma\}}|\mathcal{M}_{2}(n,R_{\alpha},[k_{min}^{\gamma}])|$ for some $\gamma\in[m]$, then $\mathcal{F}_{\gamma}=\mathcal{M}_{1}(n,R_{\gamma},[k_{min}^{\gamma}])$ and $\mathcal{F}_{\alpha}=\mathcal{M}_{2}(n,R_{\alpha},[k_{min}^{\gamma}])$ for all $\alpha\in[m]\setminus\{\gamma\}$;
    \item[(iii)] if $n=k_1+k_2$, $m=2$, $R_1=\{k_1\}$ and $R_2=\{k_2\}$, then $\mathcal{F}_1=\binom{[n]}{k_1}\setminus{\overline{\mathcal{F}_2}}$ and $\mathcal{F}_2\subseteq\binom{[n]}{k_2}$ with $0<|\mathcal{F}_2|<\binom{n}{k_2}$;
    \item[(iv)] if $n=k_1+k_2$, $m\geq 3$ and $R_i=\{k\}$ for all $i\in[m]$, then $\mathcal{F}_j=\mathcal{F}$ for all $j\in[m]$ where $\mathcal{F}$ is an
    intersecting family with $|\mathcal{F}|=\binom{n-1}{k-1}$.
\end{itemize}
The proof of Theorem \ref{maintheorem} is now complete.
\end{proof}

\section{Conclusion and Future Work}
The generating set method originated from \cite{ahlswede1997complete} and used in this paper is a powerful method to obtain upper bounds on the sum of the sizes of arbitrarily many non-empty cross-intersecting families. Furthermore, in our proof of Theorem \ref{maintheorem}, we relied heavily on Theorem \ref{keyleftlemma}. Unfortunately, there are no analogous results for cross-$t$-intersecting families when $t \geq 2$. In fact, Shi, Frankl and Qian \cite{shi2022non} posed the following problem for non-uniform cross-$t$-intersecting families:

\begin{problem}
Let $m \geq 2$ and $n, k_1, k_2, \ldots, k_m$ be positive integers. Consider non-empty cross-$t$-intersecting families $\mathcal{F}_1 \subseteq \binom{[n]}{k_1}, \mathcal{F}_2 \subseteq \binom{[n]}{k_2}, \ldots, \mathcal{F}_m \subseteq \binom{[n]}{k_m}$ such that $k_1 \geq k_2 \geq \cdots \geq k_m$ and $n \geq k_1 + k_2 - t + 1$. Is it true that  
\begin{equation*}
    \sum_{j=1}^{m} |\mathcal{F}_j| \leq \max \left\{ \sum_{j=1}^{m} \binom{n - t}{k_j - t}, \binom{n}{k_1} - \sum_{i=0}^{t-1} \binom{k_m}{i} \binom{n - k_m}{k_1 - i} + \sum_{i=2}^{m} \binom{n - k_m}{k_i - k_m} \right\}?
\end{equation*}  
\end{problem}

Gupta, Mogge, Piga and Sch\"{u}lke \cite{gupta2023r} gave a positive answer under the condition that $n \geq 2k_1 + k_{m-1} - t$ and $|\bigcap_{i=1}^{m} F_i| \geq t$ for all $F_i \in \mathcal{F}_i$ and $i \in [m]$. Similarly, we pose the following problem for generalized non-uniform families.

\begin{problem}
Let $m\geq 2$, $n$ be positive integers and $R_i=\{k_{i,1} >k_{i,2} >\cdots> k_{i,w_i}\}$ be subsets of $[n]$ for all $i=1,2,\ldots,m$. Let $\mathcal{F}_1\subseteq \binom{[n]}{R_1},\mathcal{F}_2\subseteq \binom{[n]}{R_2},\ldots,\mathcal{F}_m\subseteq \binom{[n]}{R_m}$ be non-empty cross-$t$-intersecting families such that $k_1$ (respectively, $k_2$) is the largest(respectively, second largest) value in $\{k_{1,1},k_{2,1},\ldots,k_{m,1}\}$ and $k_{min}^{\gamma}=min\{x\in R_j:j\in [m]\setminus\{\gamma\}\}$, $k_3=\min\{x\in R_j: 1\leq j\leq m\}$ and $n\geq k_1+k_2-t+1$. Is it true that
\begin{eqnarray*}
\sum_{j=1}^{m}|\mathcal{F}_j|&\leq& max \left\{ \sum_{j=1}^{m}\sum_{a\in R_j}\sum_{r\leq i\leq 2r }\binom{t+2r}{t+i}\binom{n-t-2r}{a-t-i}: 0\leq r\leq k_3-t, \right.\\&& \left.\sum_{a\in R_{\gamma}}\sum_{c=t}^{a}\binom{k_{min}^{\gamma}}{c}\binom{n-k_{min}^{\gamma}}{a-c}+\sum_{j\in [m]\setminus \{\gamma\}}\sum_{b\in R_j}\binom{n-k_{min}^{\gamma}}{b-k_{min}^{\gamma}}:\gamma\in[m]\right\}?   
\end{eqnarray*}
\end{problem}

For $m = 2$, Li, Liu, Song and Yao \cite{li2023maximum} answered the above question in the affirmative. For $m \geq 3$, the problem remains open in general.


\end{document}